\newcommand{\N}{\mathbb{N}}
\newcommand{\Z}{\mathbb{Z}}
\newcommand{\Q}{\mathbb{Q}}
\newcommand{\R}{\mathbb{R}}
\newcommand{\C}{\mathbb{C}}
\newcommand{\calE}{\mathcal{E}}
\newcommand{\calO}{\mathcal{O}}
\newcommand{\fraka} {\mathfrak{a}}
\newcommand{\abs}[1]{{\left|{#1}\right|}}
\newcommand{\norm}[1]{{\left\|{#1}\right\|}}
\newcommand{\ggen}[1]{{\left\langle{#1}\right\rangle}}
\DeclareMathOperator{\vol}{vol}
\DeclareMathOperator{\Id}{Id}
\DeclareMathOperator{\mymod}{mod}
\DeclareMathOperator{\Red}{Red}
\newenvironment{enuma}{\begin{enumerate}[\upshape (a)]}{\end{enumerate}}
\newtheorem{theorem}{Theorem}
\newtheorem{corollary}{Corollary}
\newtheorem{lemma}[theorem]{Lemma}
\newtheorem{proposition}{Proposition}
\theoremstyle{definition}
\title{Rigorous Computation of Fundamental Units in Algebraic Number Fields}
\author{Felix Fontein}
\address{Department of Mathematics \&\ Statistics, University of Calgary, 2500 University Drive NW, Calgary, Alberta, Canada T2N 1N4}
\email{fwfontei@ucalgary.ca}
\author{Michael J. Jacobson, Jr.}
\address{Department of Computer Science, University of Calgary, 2500 University Drive NW, Calgary, Alberta, Canada T2N 1N4}
\email{jacobs@ucalgary.ca}
\thanks{The second author is supported in part by NSERC of Canada.}
\begin{document}
  
\maketitle
  
\begin{abstract}
We present an algorithm that unconditionally computes a representation
of the unit group of a number field of discriminant $\Delta_K,$ given
a full-rank subgroup as input, in asymptotically fewer bit operations
than the baby-step giant-step algorithm.  If the input is assumed to
represent the full unit group, for example, under the assumption of
the Generalized Riemann Hypothesis, then our algorithm can
unconditionally certify its correctness in expected time
$O(\Delta_K^{n/(4n + 2) + \epsilon}) = O(\Delta_K^{1/4 - 1/(8n+4) +
\epsilon})$ where $n$ is the unit rank.
\end{abstract}

\section{Introduction}
  
Let $K$ be an algebraic number field of discriminant $\Delta_K.$ One
of the main computational problems in algebraic number theory is to
compute a representation of the group of units of the corresponding
maximal order $\calO_K$. The units are of interest in a number of
contexts.  As an example, it is well-known that computing the
fundamental unit of a real quadratic field is equivalent to solving
the Pell equation $x^2 - D y^2 = 1.$

In general, the unit group consists of a finite torsion subgroup and
an infinite part of rank $n,$ where $n$ is called the unit rank.  A
generating system of the infinite part is called a system of
fundamental units.  The torsion subgroup is an easily-computed group
of roots of unity, so computing the unit group means determining a
system of fundamental units.  Instead of directly computing the units
themselves, many algorithms compute a basis of the corresponding
logarithm lattice $\Lambda_K,$ a rank $n$ lattice in $\R^n$ derived
from the Archimedean absolute values of $K.$ The fundamental units can
be recovered from a basis of $\Lambda_K$ (see, for example,
\cite{thiel-comprep}).

The fastest algorithms for unconditionally computing a system of
fundamental units, meaning that they generate the entire unit group
without having to rely on any unproved assumptions or heuristics, are
of exponential complexity in the bit length of the field discriminant.
The current state-of-the-art is due to Buchmann \cite{buchmann-habil},
whose algorithm computes a basis of the logarithm lattice in
$O(\Delta_K^{1/4 + \epsilon})$ bit operations\footnote{Throughout this
paper, the $O$-constants are assumed to be dependent on the degree $[K
~:~ \Q]$ of $K.$ Furthermore, to simplify notation, expressions
involving $\Delta_K$ should be assumed to operate on $|\Delta_K|.$}.
However, if one is willing to assume the truth of the Generalized
Riemann Hypothesis (GRH), then Buchmann's index-calculus algorithm
\cite{buchmann-subexp-regulator} can be used.  This algorithm has
subexponential complexity in $\log \Delta_K$ assuming the GRH, but
unfortunately the correctness of the output also depends on the GRH.

The motivating question for the work in this paper is whether it is
possible to certify that the logarithm lattice of a unit group
produced by the index-calculus algorithm is unconditionally correct in
asymptotically fewer than $O(\Delta_K^{1/4+\epsilon})$ bit operations.
More generally, given a full rank sublattice $\Lambda'$ of the
logarithm lattice corresponding to the unit group of a number field
$K,$ is it possible to compute the full logarithm lattice in fewer
than $O((\det \Lambda')^{1/2 + \epsilon} \Delta_K^\epsilon)$ bit
operations, i.e., faster than using baby-step giant-step?

These questions were answered affirmatively for the case of real
quadratic fields in \cite{haan-jacobson-williams-fastrigorous}.  The
unit group of a real quadratic field of discriminant $\Delta$ has rank
one, generated by a single fundamental unit $\varepsilon_\Delta >
1$. The corresponding lattice of logarithms is generated by a single
real number, the regulator $R_\Delta = \log \varepsilon_\Delta.$ In
\cite{haan-jacobson-williams-fastrigorous}, it is proved that an
unconditionally correct approximation of $R_\Delta$ can be computed in
time $O(S^{1/3} \Delta^\epsilon)$ given an integer multiple $S$ of
$R_\Delta.$ Furthermore, if it is assumed that $S$ is the output of
the index-calculus algorithm, then, assuming the GRH, $S$ is the
regulator and hence of size $O(\Delta^{1/2 + \epsilon}).$ The end
result is an algorithm that unconditionally computes the regulator in
expected time $O(\Delta^{1/6 + \epsilon})$ assuming the GRH.  This
algorithm was shown to work very well in practice, as demonstrated by
the computation of the regulator of a real quadratic field with
$65$-decimal digit discriminant, the largest such result to-date.

In this paper, we generalize this result to computing a basis of the
logarithm lattice corresponding to the unit group of an algebraic
number field $K$ with arbitrary unit rank, given a full rank
sublattice $\Lambda'$ as input.  In particular, we describe an
algorithm that solves this problem in $O((\det \Lambda')^{n/(2n + 1) +
\epsilon} \Delta_K^\epsilon)$ bit operations.  For unit rank one
fields we recover the same complexity as
\cite{haan-jacobson-williams-fastrigorous}, and the algorithm is
asymptotically faster than $O((\det \Lambda')^{1/2 + \epsilon}
\Delta_K^\epsilon)$ for all $n.$ When $\Lambda'$ is computed using
the index-calculus algorithm, we have, similar to the quadratic case,
that it is in fact the full logarithm lattice under the assumption of
the GRH.  Thus, we obtain an algorithm for computing the logarithm
lattice unconditionally in expected $O(\Delta_K^{n / (4n + 2) +
\epsilon})$ bit operations assuming the GRH. Our algorithm is
asymptotically faster than $O(\Delta_K^{1/4 + \epsilon})$ for all
$n,$ but the greatest improvements occur for small $n.$ For example,
for fields of unit rank one we obtain $O(\Delta_K^{1/6 +
\epsilon}),$ the same complexity as
\cite{haan-jacobson-williams-fastrigorous} in the real quadratic case,
and for unit rank two we obtain $O(\Delta_K^{1/5 + \epsilon}).$

The paper is organized as follows.  Following a presentation of the
required notation and background in Section~\ref{sec:notation}, we
give an overview of the algorithm in Section~\ref{sec:overview}.  The
theory behind the algorithm is described in detail in
Section~\ref{sec:latticemax}, and two important subroutines are
described in Section~\ref{sec:bsgssearch}.  The algorithm itself and a
proof of its complexity are given in Section~\ref{sec:thealgsect}, and
we finish with some concluding remarks.

\section{Notation and Background}
\label{sec:notation}
  
All required information on number fields can be found in
\cite{neukirch}.  References are provided for results not appearing in
this source.
  
Let $K$ be a number field, i.e. a finite extension of $\Q$. Denote the integral closure of $\Z$ in
$K$ by $\calO_K$. This is a Dedekind domain. Let $\abs{\bullet}_1, \dots, \abs{\bullet}_{n+1}$ be
all $n + 1$ Archimedean absolute values of $K$; these correspond to embeddings $\sigma_i : K \to \C$
up to complex conjugation by $\abs{f}_i = \abs{\sigma_i(f)}$, $f \in K$, $1 \le i \le n+1$. Let
$\deg \abs{\bullet}_i := 1$ if $\sigma_i(K) \subseteq \R$, and $\deg \abs{\bullet}_i := 2$
otherwise. Consider the map \[ \Psi : K^* \to \R^n, \qquad f \mapsto (\log \abs{f}_1, \dots, \log
\abs{f}_n). \] The image of the unit group~$\calO_K^*$ is a lattice of rank~$n$, denoted by
$\Lambda_K := \Psi(\calO_K^*)$. The kernel of $\Psi|_{\calO_K^*} : \calO_K^* \to \Lambda_K$ is the
group of roots of unity in $K$, $\mu_K$, and we have that $\calO_K^* \cong \mu_K \times \Lambda_K
\cong \mu_K \times \Z^n,$ where the number~$n$ is called the \emph{unit rank} of $K$.  Thus, every
unit in $\calO_K^*$ can be written as $\zeta \varepsilon_1^{k_1} \dots \varepsilon_n^{k_n},$ where
$\zeta \in \mu_K$ and $\varepsilon_1, \dots, \varepsilon_n$ are a system of \emph{fundamental units}
of $\calO_K^*.$ The \emph{regulator} $R_K$ of $K$ equals $\det \Lambda_K \cdot \prod_{i=1}^n \deg
\abs{\bullet}_i$.

One can recover a unit~$\varepsilon$ from its image
$\Psi(\varepsilon)$ up to a root of unity.  If one sets $t_i := \log
\abs{\varepsilon}_i$, $1 \le i \le n$ and $t_{n+1} := -\frac{1}{\deg
\abs{\bullet}_{n+1}} \sum_{i=1}^n t_i \deg \abs{\bullet}_i$, one has
that $\mu_K \varepsilon \cup \{ 0 \} = \{ f \in \calO_K \mid \log
\abs{f}_i \le t_i \text{ for } 1 \le i \le n + 1 \}$.  Thus, computing
a basis of $\Lambda_K$ allows us to recover a system of fundamental
units, thereby completely determining the unit group of $\calO_K.$
  
Another important invariant of $K$ is the
\emph{discriminant}~$\Delta_K$; it is defined as follows. The
ring~$\calO_K$ is a free $\Z$-module of rank~$d = [K : \Q]$; let $v_1,
\dots, v_d \in \calO_K$ be a $\Z$-basis of $\calO_K$. Moreover, as $K
/ \Q$ is separable, one has $d$ distinct embeddings $\sigma_1, \dots,
\sigma_{n+1}, \sigma_{n+2}, \dots, \sigma_d : K \to \C$. The
discriminant~$\Delta_K$ is defined as $\det(A)^2$, where $A =
(\sigma_i(v_j))_{1 \le i, j \le d} \in \C^{n \times n}$; it can be
shown that $\Delta_K \in \Z \setminus \{ 0 \}$, with $\Delta_K \neq 
\pm 1$ for $K \neq \Q$. In order to simplify the notation, $\Delta_K$ 
should be understood to be in absolute value when required in 
arithmetic expressions and complexity statements.
  
Let $g : \R^{t+1} \to \R_{>0}$ be a function and $x_1, \dots, x_t$ be parameters which can depend on
the number field $K$; examples are $\Delta_K$, $R_K$ and $n$. We say that a quantity~$f(x_1, \dots,
x_t)$ is in $O(g(x_1, \dots, x_n, \epsilon))$, if there exist a family of constants~$C_{[K :
\Q],\epsilon} > 0$, only depending on $[K : \Q]$ and $\epsilon$, such that for all $\epsilon > 0$
and all number fields~$K$, $f(x_1(K), \dots, x_n(K)) \le C_{[K : \Q],\epsilon} \cdot g(x_1(K),
\dots, x_n(K), \epsilon)$ for sufficiently large $x_1(K), \dots, x_n(K).$ In that case, we write $f
= O(g(x_1, \dots, x_n, \epsilon))$. This simply means that the $O$-constant depends only on the
extension degree~$[K : \Q]$, and not on any other information of $K$ or any other parameter.
  
In the following, we will use that $R_K = O(\Delta_K^{1/2 +
\epsilon})$ by a result of Sands \cite{sands-generalization}, as
well as that $\det \Lambda_K = R_K / \prod_{i=1}^n \deg \abs{\bullet}_i$
can be bounded from below only in terms of $[K : \Q]$ by a result of
Remak \cite{remak-regulator}. The latter means that for any
sublattice~$\Lambda' \subseteq \Lambda_K$, we have $[\Lambda_K :
\Lambda'] = \det \Lambda' / \det \Lambda_K = O(\det
\Lambda')$. Moreover, we will use that arithmetic in $K$ can be done
in $O(\Delta_K^\epsilon)$~bit operations; see, for example,
\cite{genlagrange,buchmann-habil}.
  
Finally, for $v \in \R^n$ and $M \subseteq \R$, we set $M v := \{ v m \mid m \in M \}$, and for
subsets~$M', M'' \subseteq \R^n$, we set $M' + M'' := \{ m' + m'' \mid m' \in M', m'' \in M''
\}$. We equip $\R^n$ with the Euclidean norm, denoted by $\norm{\bullet}$, as well as with the
Lebesgue measure, denoted by $\vol$.

\section{Overview of the Algorithm}
\label{sec:overview}
  
Our algorithm will, given a sublattice~$\Lambda' \subseteq \Lambda_K$
of full rank~$n$, compute $\Lambda_K$ in $O((\det
\Lambda')^{\frac{n}{2n + 1} + \epsilon} \Delta_K^\epsilon)$~bit
operations, using $O((\det \Lambda')^{\frac{n}{2 n + 1}}
\Delta_K^\epsilon)$~bits of storage.
  
The idea can be sketched as follows. Since $\Lambda'$ is of full rank,
the quotient group~$\Lambda_K / \Lambda'$ is finite. Denote its order
by $i_{\Lambda'}$. Now we do not know $\Lambda_K$ or $i_{\Lambda'}$,
but there is an effective test whether a prime~$p$ divides the
index~$i_{\Lambda'}$ based on the following proposition, which we will
prove in Section~\ref{sec:latticemax}.
  
\begin{proposition}
  \label{mainpropoverview}
  Assume that $\Lambda' = \sum_{i=1}^n \Z v_i$ for a basis~$(v_1,
  \dots, v_n)$ of $\R^n$. A prime~$p$ divides~$i_{\Lambda'}$ if, and
  only if, there is an element of $\Lambda_K$ in \[ \bigcup_{k=1}^n
  \biggl\{ \sum_{i=1}^{k-1} \frac{a_i}{p} v_i + \frac{1}{p} v_k
  \;\biggm|\; a_1, \dots, a_{k-1} \in \{ 0, \dots, p - 1 \}
  \biggr\}. \] If such an element~$v$ exists, set $\Lambda'' :=
  \Lambda' + \Z v$. This is a sublattice of $\Lambda_K$ with
  $i_{\Lambda''} = [\Lambda_K : \Lambda''] =
  \frac{i_{\Lambda'}}{p}$. \qed
\end{proposition}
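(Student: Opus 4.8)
The plan is to work in the finite abelian group $G := \Lambda_K / \Lambda'$, whose order is $i_{\Lambda'}$, and translate the divisibility condition "$p \mid i_{\Lambda'}$" into the existence of a suitable element of order $p$ in $G$. First I would observe that $p \mid i_{\Lambda'}$ if and only if $G$ has an element of order $p$, equivalently a nonzero element of the $p$-torsion subgroup $G[p]$; this is Cauchy's theorem in one direction and elementary in the other. Such an element of $G[p]$ is represented by a vector $v \in \Lambda_K$ with $p v \in \Lambda'$, i.e. $v = \sum_{i=1}^n \frac{b_i}{p} v_i$ with $b_i \in \{0,\dots,p-1\}$ not all zero; conversely, $v \notin \Lambda'$ precisely when the $b_i$ are not all zero. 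So "$p \mid i_{\Lambda'}$" is equivalent to the existence of an element of $\Lambda_K$ in the set $\bigl\{\sum_{i=1}^n \frac{b_i}{p} v_i \;\bigm|\; b_i \in \{0,\dots,p-1\},\ (b_1,\dots,b_n)\neq 0\bigr\}$.

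The remaining point is to cut this set down to the smaller union displayed in the statement. Given such a $v$, let $k$ be the largest index with $b_k \neq 0$. Since $\gcd(b_k,p)$ is either $1$ or $p$ and $b_k \in \{1,\dots,p-1\}$, $b_k$ is a unit mod $p$; pick $c$ with $c b_k \equiv 1 \pmod p$. Then $c v \in \Lambda_K$ as well (as $\Lambda_K$ is a group), and reducing each coordinate of $c v$ modulo $\Z v_i$ we get a vector of the form $\sum_{i=1}^{k-1} \frac{a_i}{p} v_i + \frac{1}{p} v_k$ with $a_i \in \{0,\dots,p-1\}$, which again lies in $\Lambda_K$. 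This shows that if $p \mid i_{\Lambda'}$ then $\Lambda_K$ meets the displayed union; the converse is immediate since every element of that union is a nonzero element of $\frac{1}{p}\Lambda' \setminus \Lambda'$, hence represents a nonzero class of $G$ killed by $p$, forcing $p \mid |G| = i_{\Lambda'}$.

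For the final sentence, suppose such a $v$ exists and set $\Lambda'' := \Lambda' + \Z v \subseteq \Lambda_K$. Since $p v \in \Lambda'$ and $v \notin \Lambda'$, the quotient $\Lambda''/\Lambda'$ is cyclic of order exactly $p$, so $[\Lambda'' : \Lambda'] = p$ and hence $i_{\Lambda''} = [\Lambda_K : \Lambda''] = [\Lambda_K : \Lambda']/[\Lambda'' : \Lambda'] = i_{\Lambda'}/p$ by the tower law for indices of full-rank lattices. Both sides are finite because $\Lambda''$ still has full rank $n$, so the quotient is finite and the tower law applies.

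I expect the only genuine subtlety to be the reduction step in the second paragraph: verifying that after scaling by the inverse $c$ of the leading coefficient and reducing coordinates modulo $1$ (i.e. modulo $\Z v_i$), the resulting vector still lies in $\Lambda_K$ and has exactly the normal form claimed, with $1/p$ in the $v_k$-slot and zeros beyond. This is where one uses crucially that $\Lambda_K$ contains $\Lambda'$, so subtracting integer combinations of the $v_i$ keeps us inside $\Lambda_K$; everything else is routine finite-group and lattice-index bookkeeping.
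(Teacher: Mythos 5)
Your proposal is correct and follows essentially the same route as the paper: the paper proves this proposition by combining Lemma~\ref{indexcoprimetestlemma} (existence of an element of order $p$ in $\Lambda_K/\Lambda'$, i.e.\ of a nonzero class in $(\frac1p\Lambda'\cap\Lambda_K)/\Lambda'$) with Proposition~\ref{finitegrouptrivialdetectlemma}(b) (normalizing a generator of a cyclic subgroup of $(\Z/p\Z)^n$ by multiplying by the inverse of its leading nonzero coordinate), which are precisely your two paragraphs. Your handling of the final index claim via the tower law also matches the last part of Lemma~\ref{indexcoprimetestlemma}.
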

  
\begin{figure}[b]
  \psfrag{v1}{$v_2$}
  \psfrag{v2}{$v_1$}
  \psfrag{V}{$V_B$}
  \psfrag{p=2}{$p = 2$}
  \psfrag{p=3}{$p = 3$}
  \psfrag{p=5}{$p = 5$}
  \psfrag{k1}{$k = 1$}
  \psfrag{k2}{$k = 2$}
  \begin{center}
    \subfloat[Proposition~\ref{mainpropoverview} in case $n = 2$ and $p = 5$]%
    {\includegraphics[height=3cm]{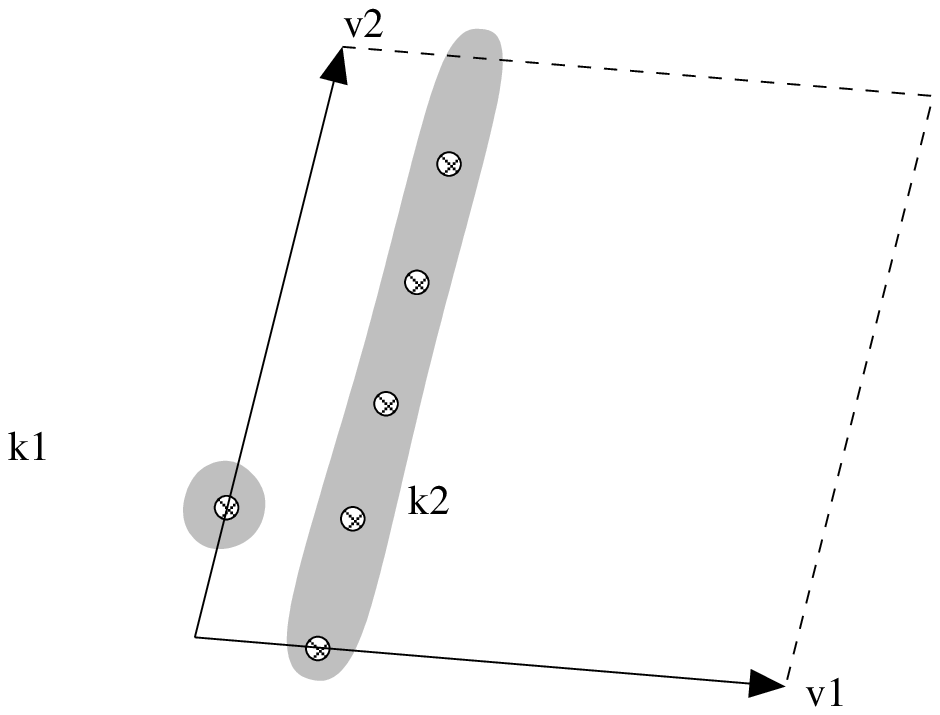}\label{fig:propsketch}}
    \qquad\qquad
    \subfloat[Overview of the algorithm for $n = 2$ and $B = 6$]%
    {\includegraphics[height=3cm]{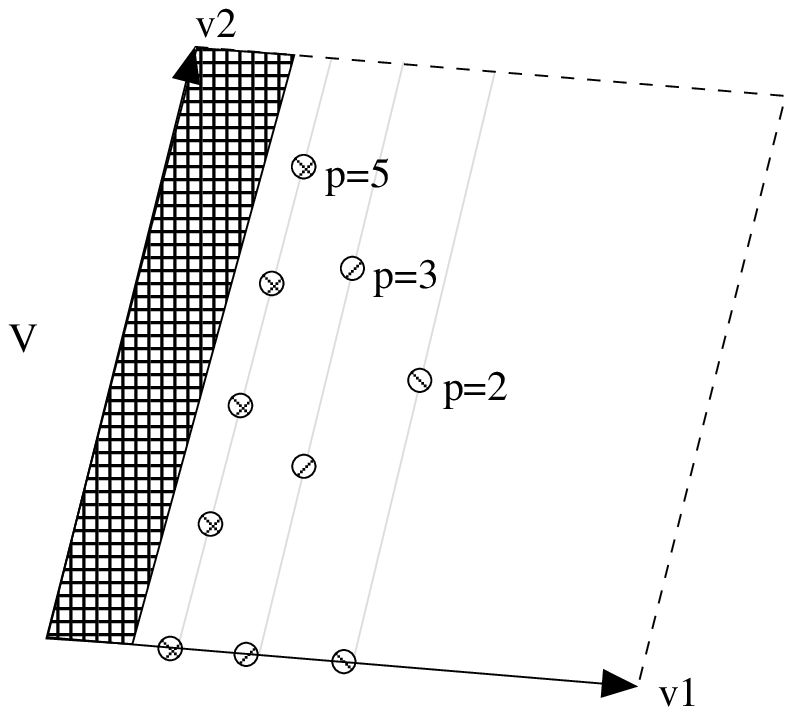}\label{fig:algsketch}}
  \end{center}
  \caption{Overview of the Algorithm}
\end{figure}

The search set in the proposition is shown in
Figure~\ref{fig:propsketch}. If we would have a finite set of
candidates for prime divisors of $i_{\Lambda'}$, we could iterate
through the set of candidates and use the proposition to determine the
prime divisors of $i_{\Lambda'}$, their multiplicities and, most
importantly, $\Lambda_K$ itself. Unfortunately, as $i_{\Lambda'} =
\calO(\det \Lambda'),$ this method would in general be slower than
baby-step giant-step.
  
Alternatively, one could simply search a fundamental parallelepiped of
$\Lambda'$, such as $\sum_{i=1}^n [0, 1] v_i$, for elements of
$\Lambda_K$. Using Buchmann's baby-step giant-step method for number
fields as presented in \cite{buchmann-habil}, this can be done in
$O((\det \Lambda')^{1/2} \Delta_K^\epsilon)$~bit operations. But
instead, one could also directly apply Buchmann's method to compute a
basis for $\Lambda_K$ and compare it to $\Lambda'$; if $i_{\Lambda'}
\gg 1$, this would actually be faster.
  
The idea of our algorithm is to combine both approaches. First, we
test all primes~$p$ below a bound~$B$ using an algorithm based on
Proposition~\ref{mainpropoverview}. After that, we use Buchmann's
algorithm to search a small subset of the fundamental parallelepiped
for elements of $\Lambda_K$.  Note that the set of elements we have to
search for Proposition~\ref{mainpropoverview} lies in a small subset
of the fundamental parallelepiped, as illustrated in
Figure~\ref{fig:propsketch}. More precisely, if $\Lambda' =
\sum_{i=1}^n \Z v_i$ as in the proposition, the search set for a
prime~$p$ lies in \[ V_p := \sum_{i=1}^{n-1} [0, 1] v_i + [0,
\tfrac{1}{p}] v_n. \] Moreover, if $q \ge p$, then $V_q \subseteq
V_p$. Therefore, if we use the method from
Proposition~\ref{mainpropoverview} for all primes~$\le B$, then it
suffices to search the set $V_B$ using Buchmann's method, as
illustrated in Figure~\ref{fig:algsketch}.  Finding an optimal value
of $B$ that minimizes the total running time of the two parts of the
algorithm gives us the results.
  
Note that we ignore all approximation issues in this algorithm, and
refer to the discussion in Sections~13 and 16 of
\cite{buchmann-habil}.

\section{Lattice Maximization}
\label{sec:latticemax}
  
Lattice maximization refers to the process described in the previous
section, proving that $\Lambda' = \Lambda_K$ or finding a sublattice
$\Lambda''$ with $\Lambda' \subset \Lambda'' \subseteq \Lambda_K.$ In
this section, we describe in more detail the lattice maximization
algorithm outlined in the previous section, and prove the results
required to establish its correctness and complexity.

We begin with a lemma which allows us to determine whether an integer
is coprime to the index~$i_{\Lambda'}$.
\begin{lemma}
  \label{indexcoprimetestlemma}
  An integer~$t > 0$ has a non-trivial common divisor with $i_{\Lambda'}$ if, and only if, $\Lambda'
  \subsetneqq \frac{1}{t} \Lambda' \cap \Lambda_K$.  Moreover, any element~$v \in (\frac{1}{t}
  \Lambda' \cap \Lambda_K) \setminus \Lambda'$ gives rise to a sublattice~$\Lambda'' := \Lambda' +
  \Z v \supsetneqq \Lambda'$ with $\Lambda'' \subseteq \Lambda_K$, and $[\Lambda'' : \Lambda']$ is a
  divisor of $t$.
\end{lemma}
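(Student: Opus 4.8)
The plan is to relate the containment $\Lambda' \subseteq \frac{1}{t}\Lambda' \cap \Lambda_K$ to the structure of the finite quotient $G := \Lambda_K / \Lambda'$, whose order is $i_{\Lambda'}$. First I would observe that $v \in \frac{1}{t}\Lambda' \cap \Lambda_K$ means precisely that $v \in \Lambda_K$ and $tv \in \Lambda'$, i.e. $v$ represents a class in $G$ annihilated by $t$. Thus $(\frac{1}{t}\Lambda' \cap \Lambda_K)/\Lambda'$ is exactly the $t$-torsion subgroup $G[t]$ of $G$. So $\Lambda' \subsetneqq \frac{1}{t}\Lambda' \cap \Lambda_K$ if and only if $G[t] \neq 0$, and by elementary group theory (or by the structure theorem for finite abelian groups) $G[t] \neq 0$ if and only if $\gcd(t, |G|) > 1$. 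This establishes the first claim.

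For the second claim, take $v \in (\frac{1}{t}\Lambda' \cap \Lambda_K)\setminus\Lambda'$ and set $\Lambda'' := \Lambda' + \Z v$. Since $v \in \Lambda_K$ and $\Lambda' \subseteq \Lambda_K$, we have $\Lambda'' \subseteq \Lambda_K$; since $v \notin \Lambda'$, the containment $\Lambda' \subsetneqq \Lambda''$ is strict. It remains to bound $[\Lambda'' : \Lambda']$. The index equals the order of the cyclic group $\Lambda''/\Lambda' = (\Lambda' + \Z v)/\Lambda'$, which is generated by the image $\bar v$ of $v$; the order of $\bar v$ is the smallest positive integer $m$ with $mv \in \Lambda'$. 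Since $tv \in \Lambda'$ by choice of $v$, we have $m \mid t$, so $[\Lambda'' : \Lambda'] \mid t$.

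I do not expect a serious obstacle here; the lemma is essentially a translation of divisibility statements about $|G|$ into the torsion language, and all the needed facts are standard. The only point requiring a little care is the equivalence ``$G[t] \neq 0 \iff \gcd(t,|G|) > 1$'': the forward direction is immediate since $|G[t]|$ divides $|G|$, and the reverse follows because any prime $p$ dividing both $t$ and $|G|$ forces an element of order $p$ in $G$ by Cauchy's theorem, which then lies in $G[t]$. One should also note in passing that $\Lambda''$ is automatically a lattice of full rank $n$, being sandwiched between $\Lambda'$ and $\Lambda_K$, so that the index notation is meaningful.
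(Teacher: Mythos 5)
Your proof is correct and follows essentially the same route as the paper's: both arguments rest on identifying $(\frac{1}{t}\Lambda' \cap \Lambda_K)/\Lambda'$ with the $t$-torsion of $G = \Lambda_K/\Lambda'$, using Cauchy's theorem for one direction and Lagrange (order of an element divides $|G|$) for the other, and the second claim is handled identically via the cyclic quotient $\Lambda''/\Lambda'$ generated by $\bar v$.
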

  
\begin{proof}
  First, assume that $d = \gcd(t, i_{\Lambda'}) > 1$. Let $p$ be a
  prime dividing~$d$. Then there exists an element~$v \in \Lambda_K
  \setminus \Lambda'$ with $p v \in \Lambda'$. But then, $v \in
  (\Lambda_K \cap \frac{1}{p} \Lambda') \setminus \Lambda' \subseteq
  (\Lambda_K \cap \frac{1}{t} \Lambda') \setminus \Lambda'$.
    
  On the contrary, assume that there exists some~$v \in (\Lambda_K
  \cap \frac{1}{t} \Lambda') \setminus \Lambda'$. Then $t v \in
  \Lambda'$, whence the order of $v$ in $\Lambda' / \frac{1}{t}
  \Lambda'$ is a non-trivial divisor of $t$. But since the order
  divides $\abs{\Lambda' / \frac{1}{t} \Lambda'} = i_{\Lambda'}$, we
  see that $\gcd(t, i_{\Lambda'}) > 1$.
    
  For any~$v \in (\frac{1}{t} \Lambda' \cap \Lambda_K) \setminus
  \Lambda'$, we have $\Lambda'' := \Lambda' + \Z v \subseteq
  \Lambda_K$ and $\Lambda' \subsetneqq \Lambda''$, and since $t v \in
  \Lambda'$ we see that $\Lambda'' / \Lambda' = (v + \Lambda') \Z$ is
  cyclic of order dividing $t$. \qed
\end{proof}
  
Note that we have a tower of subgroups $\Lambda' \subseteq
\tfrac{1}{t} \Lambda' \cap \Lambda_K \subseteq \tfrac{1}{t} \Lambda'.$
The lemma says that $\gcd(t, i_{\Lambda'}) > 1$ if, and only if,
$(\frac{1}{t} \Lambda' \cap \Lambda_K) / \Lambda'$ is not the trivial
subgroup of $\frac{1}{t} \Lambda' / \Lambda'$. Hence, if we let~$p$ be
a prime divisor of $i_{\Lambda'}$, we can replace~$\Lambda'$ by a
sublattice~$\Lambda''$ with $i_{\Lambda''} = \frac{i_{\Lambda'}}{p}$
by searching a set of representatives of $\Lambda' / \frac{1}{p}
\Lambda'$. But this can be done more efficiently, as hinted in
Proposition~\ref{mainpropoverview}. This is provided by the following
result; note that $\frac{1}{t} \Lambda' / \Lambda' \cong (\Z/t\Z)^n$.
  
\begin{proposition}
  \label{finitegrouptrivialdetectlemma}
  Let $G$ be a finite group, and let $H \subseteq G$ be a
  subgroup. Let $S$ be the set of all cyclic subgroups of prime order
  of $G$, and let $\tilde{S} \subseteq G$ such that for every~$U \in
  S$, there exists a unique element~$g \in \tilde{S}$ with $U =
  \ggen{g}$.
  \begin{enuma}
    \item The subgroup $H$ of $G$ is trivial if, and only if, $\tilde{S} \cap H = \emptyset$.
    \item If $G = (\Z/p\Z)^m$ for a prime~$p$ and $m \in \N$, we
      can choose the set~$\tilde{S}$ to be a subset of \[ \{ (a_1,
      \dots, a_m) + p \Z^m \mid (a_1, \dots, a_m) \in \{ 0, \dots, p-1
      \}^m, \; a_m \le 1 \}. \]
  \end{enuma}
\end{proposition}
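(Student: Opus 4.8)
The plan is to prove part~(a) first, then deduce part~(b) by exhibiting an explicit choice of representatives.

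For part~(a), the forward direction is immediate: if $H$ is trivial then $H = \{e\}$, and since every element $g \in \tilde S$ generates a cyclic subgroup of prime order, $g \neq e$, so $g \notin H$; hence $\tilde S \cap H = \emptyset$. For the converse, I argue by contrapositive: suppose $H$ is non-trivial, so there is some $h \in H$ with $h \neq e$. The cyclic group $\ggen{h}$ has finite order $> 1$, so it contains an element $h'$ of prime order (take $h' = h^{k}$ where $k = |\ggen{h}|/q$ for any prime $q \mid |\ggen{h}|$). Then $\ggen{h'} \in S$, so by definition of $\tilde S$ there is a $g \in \tilde S$ with $\ggen{g} = \ggen{h'}$. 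Since $h' \in H$ (as $H$ is a subgroup containing $h$) and $g \in \ggen{h'}$, we get $g \in H$, so $g \in \tilde S \cap H$, which is therefore non-empty.

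For part~(b), take $G = (\Z/p\Z)^m$. Here every non-identity element has order exactly $p$, so $S$ is the set of all lines through the origin in the $\F_p$-vector space $\F_p^m$, and each such line has $p - 1$ non-zero elements, any of which generates it. I need to pick, for each line, one canonical representative lying in the stated set $T := \{ (a_1, \dots, a_m) + p\Z^m \mid a_i \in \{0, \dots, p-1\},\ a_m \le 1 \}$. Given a line $U$, look at its non-zero vectors: either they all have last coordinate $\equiv 0$, in which case $U$ has $p - 1$ non-zero vectors all lying in the hyperplane $a_m = 0 \subseteq T$, and I pick any one of them; or some non-zero vector $w \in U$ has last coordinate $c \not\equiv 0$, in which case $c^{-1} w$ (scalar in $\F_p$) is a non-zero vector of $U$ with last coordinate $1$, hence lies in $T$, and it is the \emph{unique} such vector since two vectors of $U$ with the same last coordinate $1$ differ by a scalar which must then fix the last coordinate, forcing the scalar to be $1$. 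Either way we obtain at least one element of $U \cap T$; choosing exactly one per line yields a valid $\tilde S \subseteq T$.

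I do not expect a serious obstacle here; the only point requiring a little care is making sure $\tilde S$ is well-defined as a \emph{set} of representatives — that is, that distinct lines get distinct representatives (automatic, since $\ggen{g}$ determines and is determined by the line) and that every line is covered. The hyperplane case, where a line may meet $T$ in more than one point, is handled simply by choosing one; the proposition only asks that $\tilde S$ \emph{can be chosen} inside the displayed set, not that the displayed set itself is such a system.
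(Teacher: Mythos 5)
Your proof is correct and follows essentially the same route as the paper's: part (a) via the existence of a prime-order element in any non-trivial subgroup, and part (b) by scaling each non-zero vector of $\F_p^m$ so that its last coordinate becomes $0$ or $1$. The only cosmetic difference is that you organize part (b) by lines through the origin, while the paper normalizes element-by-element; the content is identical.
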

  
\begin{proof}\hfill
  \begin{enuma}
    \item The neutral element~$e$ generates the trivial subgroup of
      $G$. Hence, if $H = \{ e \}$, then $H \cap \tilde{S} =
      \emptyset$. Conversely, assume that $\abs{H} > 1$. Then there
      exists an element~$g \in H$ of prime order, and $\ggen{g}$ is a
      non-trivial cyclic subgroup of prime order of $G$. Hence,
      $\ggen{g} \in S$, and there exists some~$\tilde{g} \in
      \tilde{S}$ with $\ggen{g} = \ggen{\tilde{g}}$. In particular,
      $\tilde{g} \in \ggen{g} \subseteq H$, whence $\tilde{S} \cap H
      \neq \emptyset$.
    \item Let $v = (v_1, \dots, v_m) + p \Z^m \in G$. If $p \mid v_m$,
      set $\lambda := 1$. Otherwise, let $\lambda \in \N$ such that
      $\lambda v_m \equiv 1 \pmod{p}$. Set $\tilde{v}_i := \lambda v_i
      \mymod p$; then $\tilde{v}_i \in \{ 0, \dots, p-1 \}$ and
      $\tilde{v}_m \in \{ 0, 1 \}$, and we have $(\tilde{v}_1, \dots,
      \tilde{v}_m) + p \Z^m = \lambda v$ and $\lambda + p \Z \in
      (\Z/p\Z)^*$.  Since $p G = \{ (0, \dots, 0) + \Z^m \}$, we see
      that every non-trivial cyclic subgroup of $G$ is of order~$p$,
      and the previous discussion shows that every such subgroup is
      generated by at least one element in the set from the statement
      of the lemma. \qed
  \end{enuma}
\end{proof}
  
In fact, we can also write down a minimal such set $\tilde{S}$ for
$(\Z/p\Z)^m$ directly as
\[ 
\tilde{S} = \biggl\{ (v_1, \dots, v_i, 1, 0, \dots, 0) + p \Z^m
\;\biggm| \begin{matrix} i \in \{ 0, \dots, m - 1 \}, \hfill \\ 
v_1, \dots, v_i \in \{ 0, \dots, p-1 \} \end{matrix} \biggr\}. 
\] 
This shows that $\bigl|\tilde{S}\bigr| = 1 + p + p^2 + \dots + p^{m-1}
= \frac{p^m - 1}{p - 1}$. For our algorithm, we can restrict to a
subset of $p^{m-1}$ elements, since we also search the volume
\[ 
V := \sum_{i=1}^{n-1} [0, 1] v_i + [0, \tfrac{1}{B}] v_n, 
\]
where the $v_i$ are a basis of $\R^n.$ Then we only need the elements
of the form $(v_1, \dots, v_{m-1}, 1) + p \Z^m$.
  
These two results imply Proposition~\ref{mainpropoverview}. Moreover,
we combine them as sketched in Section~\ref{sec:overview} to obtain
our algorithm. The following corollary presents the preceding
material in a way which leads directly to the algorithm and its
correctness. It is also helpful to compare it with the sketch in
Figure~\ref{fig:algsketch}.
  
\begin{corollary}
  \label{latticemaxcorollary}
  Assume that $\Lambda' = \sum_{i=1}^n \Z v_i$, and let $B > 0$ be
  arbitrary. Let $p_1, \dots, p_t$ be all primes~$\le B$. For $i \in
  \{ 1, \dots, t \}$, set \[ \tilde{S}_i := \{ \tfrac{1}{p_i} (a_1 v_1
  + \dots + a_{n-1} v_{n-1} + v_n) \mid a_1, \dots, a_{n-1} \in \{ 0,
  \dots, p_i - 1 \} \}. \] Moreover, define the volume \[ V_B :=
  \sum_{i=1}^{n-1} [0, 1) v_i + [0, \tfrac{1}{B}) v_n. \] Then
  $\Lambda_K = \Lambda'$ if, and only if, $\Lambda_K \cap (V_B \cup
  \bigcup_{i=1}^t \tilde{S}_i) = \{ 0 \}$. Otherwise, any non-trivial
  element $v$ of $\Lambda_K \cap (V_B \cup \bigcup_{i=1}^t
  \tilde{S}_i)$ gives a lattice~$\Lambda'' := \Lambda' + \Z v$ with
  $\Lambda' \subsetneqq \Lambda'' \subseteq \Lambda_K$.
    
  Moreover, $\vol(V_B) = \frac{1}{B} \det \Lambda'$, $t =
  O(\frac{B}{\log B})$ and $\sum_{i=1}^t \bigl|\tilde{S}_i\bigr| =
  O(B^n / \log B)$.
\end{corollary}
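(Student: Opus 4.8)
The plan is to deduce the whole statement from Proposition~\ref{mainpropoverview} by comparing, for each prime~$p$, the search set
$\mathcal U_p := \bigcup_{k=1}^n \{ \sum_{i=1}^{k-1} \tfrac{a_i}{p} v_i + \tfrac1p v_k \mid a_1,\dots,a_{k-1}\in\{0,\dots,p-1\} \}$
appearing there with the sets $V_B$ and the $\tilde S_i$. Two elementary observations, both obtained by reading off coordinates in the basis $(v_1,\dots,v_n)$, do all the work. First, an element of $\mathcal U_p$ produced by an index $k<n$ has $v_n$-coordinate $0$ and all remaining coordinates in $[0,1)$, so it lies in $V_B$; an element produced by $k=n$ has $v_n$-coordinate $\tfrac1p$ and its other coordinates of the form $\tfrac{a_i}{p}\in[0,1)$, so it lies in $\tilde S_j$ when $p=p_j\le B$ and it lies in $V_B$ when $p>B$, because then $\tfrac1p<\tfrac1B$. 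Hence $\mathcal U_p\subseteq V_B\cup\tilde S_j$ for $p=p_j\le B$ and $\mathcal U_p\subseteq V_B$ for $p>B$; moreover every element of $\mathcal U_p$ is nonzero, since its $v_k$-coordinate is $\tfrac1p\ne 0$. Second, $V_B\cap\Lambda'=\{0\}$ and $\tilde S_i\cap\Lambda'=\emptyset$, since a nonzero point of $V_B$ has a non-integer coordinate in the $v$-basis while a point of $\tilde S_i$ has $v_n$-coordinate $\tfrac1{p_i}\notin\Z$.

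Granting this, the equivalence follows at once. If $\Lambda_K=\Lambda'$, then $\Lambda_K\cap(V_B\cup\bigcup_i\tilde S_i)=\Lambda'\cap(V_B\cup\bigcup_i\tilde S_i)=\{0\}$ by the second observation. Conversely, if $\Lambda_K\neq\Lambda'$ then $i_{\Lambda'}=[\Lambda_K:\Lambda']>1$, so some prime $p$ divides $i_{\Lambda'}$; Proposition~\ref{mainpropoverview} then provides an element of $\Lambda_K$ in $\mathcal U_p$, which by the first observation is a nonzero element of $\Lambda_K$ lying in $V_B\cup\bigcup_i\tilde S_i$. For the remaining clause, any nonzero $v\in\Lambda_K\cap(V_B\cup\bigcup_i\tilde S_i)$ satisfies $v\notin\Lambda'$ by the second observation, so $\Lambda'':=\Lambda'+\Z v$ satisfies $\Lambda'\subsetneqq\Lambda''\subseteq\Lambda_K$.

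The ``Moreover'' part is bookkeeping. Writing $T$ for the linear automorphism of $\R^n$ sending the $i$-th standard basis vector to $v_i$, one has $|\det T|=\det\Lambda'$ and $V_B=T\bigl([0,1)^{n-1}\times[0,\tfrac1B)\bigr)$, so $\vol(V_B)=|\det T|\cdot\tfrac1B=\tfrac1B\det\Lambda'$. The number $t$ is exactly the number of primes $\le B$, which is $O(B/\log B)$ by Chebyshev's bound (or the prime number theorem). Finally, distinct tuples $(a_1,\dots,a_{n-1})\in\{0,\dots,p_i-1\}^{n-1}$ yield distinct elements of $\tilde S_i$, so $|\tilde S_i|=p_i^{\,n-1}$ and $\sum_{i=1}^t|\tilde S_i|=\sum_{p\le B}p^{\,n-1}\le t\cdot B^{n-1}=O(B^n/\log B)$.

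I do not expect a genuine obstacle: the corollary is a repackaging of Propositions~\ref{mainpropoverview} and~\ref{finitegrouptrivialdetectlemma}. The one point requiring care is the first observation — matching the case $p\le B$, where the ``$k=n$'' elements need the separate set $\tilde S_i$, against $p>B$, where they fall into $V_B$ — which relies precisely on $\tfrac1p<\tfrac1B$ for $p>B$ versus $\tfrac1p\ge\tfrac1B$ for $p\le B$; in particular the half-open intervals in the definitions of $V_B$ and $\tilde S_i$ must be taken as written, and one should double-check the degenerate cases $n=1$ (where the ``$k<n$'' part of $\mathcal U_p$ is empty).
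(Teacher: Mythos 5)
Your proof is correct and follows essentially the same route as the paper's: both reduce the claim to the search set of Proposition~\ref{mainpropoverview} (equivalently Proposition~\ref{finitegrouptrivialdetectlemma}), verify by reading off coordinates that this set is contained in $V_B\cup\tilde S_j$ for $p=p_j\le B$ and in $V_B$ for $p>B$, check that $V_B\cup\bigcup_i\tilde S_i$ meets $\Lambda'$ only in $0$, and finish the volume and counting estimates identically. Your write-up is in fact slightly more explicit than the paper's on the coordinate checks and on why nontrivial elements of the search set lie outside $\Lambda'$.
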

  
\begin{proof}
  Clearly \[ \biggl( V_B \cup \bigcup_{i=1}^t \tilde{S}_i \biggr) \cap
  \Lambda_K = \{ 0 \}. \] Now assume that $\Lambda' \subsetneqq
  \Lambda$. Let $p$ be a prime dividing~$i_{\Lambda'}$ and define \[
  \tilde{S} := \biggl\{ \tfrac{1}{p} (a_1 v_1 + \dots + a_i v_i +
  v_{i+1}) \;\biggm| \begin{matrix} i \in \{ 0, \dots, n - 1 \},
  \hfill \\ a_1, \dots, a_i \in \{ 0, \dots, p - 1 \} \end{matrix}
  \biggr\}; \] by Proposition~\ref{finitegrouptrivialdetectlemma},
  $\tilde{S}$ must contain a non-trivial element of $\Lambda_K$. In
  case~$p > B$, we have $\tilde{S} \subseteq V_B$; and in case $p \le
  B$, say $p = p_i$, we have $\tilde{S} \subseteq \tilde{S}_i \cup
  V_B$.
    
  Now $\vol(V_B) = \frac{1}{B} \vol\bigl(\sum_{i=1}^n [0, 1) v_i\bigr)
  = \frac{1}{B} \det \Lambda'$. Moreover, by the Prime Number Theorem,
  $t = O(\frac{B}{\log B})$. Finally, $\bigl|\tilde{S}_i\bigr| =
  p_i^{n - 1} \le B^{n - 1}$, whence $\sum_{i=1}^t
  \bigl|\tilde{S}_i\bigr| \le t B^{n - 1} = O(\frac{B^n}{\log
  B})$. \qed
\end{proof}
  
We have seen how the idea sketched in the last section can be made
rigorous. It translates in a straightforward manner into an algorithm,
as we will see in Section~\ref{sec:thealgsect}. The only missing
pieces are how to search for elements in $V \cap \Lambda_K$, and how
to test whether some $v \in \R^n$ lies in $\Lambda_K$. We will
investigate this in the next section.

\section{Baby-Step Giant-Step Search and Existence Testing}
\label{sec:bsgssearch}
  
We will now investigate how to search for elements of $\Lambda_K$ in
the set $V = \sum_{i=1}^n [0, 1] v_i,$ where $v_1,
\dots, v_n$ is a basis of $\R^n$.  We assume that this basis is mostly
orthogonal, i.e. $\abs{\det(v_1, \dots, v_n)}^{-1} \prod_{i=1}^n
\norm{v_i} = O(1)$. This means that $\vol(V) = O(\prod_{i=1}^n
\norm{v_i})$. The algorithm requires $O(\vol(V)^{1/2}
\Delta_K^\epsilon)$~bit operations and was first described by Buchmann
in \cite{buchmann-habil}. We will also describe how to test whether an
element~$v \in \R^n$ lies in $\Lambda_K$.
  
For describing these algorithms, we need fractional ideals and the
notion of minima of these. A \emph{fractional ideal} is a finitely
generated $\calO_K$-submodule of $K$; it is always of the form
$\frac{1}{f} \fraka$, where $\fraka$ is an (integral) ideal of
$\calO_K$ in the usual sense and $f \in \calO_K \setminus \{ 0 \}$. As
$\calO_K$ is a Dedekind domain, the nonzero fractional ideals form a
free abelian group~$\Id(K)$ generated by the prime ideals of
$\calO_K$.
  
To define a minimum of an ideal, we use methods from Minkowski's
geometry of numbers. Set $W_i := \R$ if $\deg \abs{\bullet}_i = 1$ and
$W_i := \C$ otherwise. Then \[ \Phi : K \to W_K := \prod_{i=1}^{n+1}
W_i, \qquad f \mapsto (\sigma_1(f), \dots, \sigma_{n+1}(f)) \] is
injective and maps every fractional ideal~$\fraka \in \Id(K)$ onto a
lattice in the $[K : \Q]$-dimensional real vector space~$W_K \cong K
\otimes_\Q \R$. For $\fraka \in \Id(K)$ and $t_1, \dots, t_{n+1} \in
\R_{> 0}$, define \[ B(\fraka, t_1, \dots, t_{n+1}) := \{ f \in \fraka
\mid \abs{f}_i \le t_i \}. \] Then $\Phi$ identifies $B(\fraka, t_1,
\dots, t_{n+1})$ with the finite set of elements in $\Phi(\fraka)$ which lie
in the bounded area $\{ (v_1, \dots, v_{n+1}) \in W_K \mid \abs{v_i}
\le t_i \}$. For convenience, define
\begin{align*}
  B(\fraka, f, f') :={} & B(\fraka, \max\{\abs{f}_1, \abs{f'}_1 \}, \dots, \max\{\abs{f}_{n+1},
  \abs{f'}_{n+1}\} )
  \intertext{and} 
  B(\fraka, f) :={} & B(\fraka, f, f) = B(\fraka, \abs{f}_1, \dots,
  \abs{f}_{n+1})
\end{align*}
for $f, f' \in K^*$. Using this notation, we have that $B(\calO_K,
\varepsilon) = \{ 0 \} \cup \mu_K \varepsilon$ if $\varepsilon \in
\calO_K^*$.
  
Let $\fraka \in \Id(K)$. We say that $\mu \in \fraka$ is a
\emph{minimum} of $\fraka$ if $f \in B(\fraka, \mu) \setminus \{ 0 \}$
implies $\abs{f}_i = \abs{\mu}_i$ for some~$i$. Denote the set of all
minima of $\fraka$ by $\calE(\fraka)$. We say that $\fraka$ is
\emph{reduced} if $1 \in \calE(\fraka)$. Note that $\calO_K$ itself is
reduced.
  
The set $\Psi(\calE(\fraka))$ is distributed rather uniformly in
$\R^n$; here, $\Psi$ is as defined in Section~\ref{sec:notation}. More
precisely, Buchmann showed the following.
\begin{proposition}[\cite{buchmann-ontheperiodlength,buchmann-habil}]
  \label{distributionprop}
  Let $V = \sum_{i=1}^n [a_i, b_i] v_i$, where $(v_1, \dots, v_n)$ is
  a basis of $\R^n$ and $a_i < b_i$.
    \begin{enuma}
      \item Assuming that the $v_i$'s are mostly orthogonal, the set $\Psi(\calE(\fraka)) \cap V$
      contains $O(\vol(V))$~elements.
      \item If $V$ contains a sphere of radius~$\tfrac{1}{4} \sqrt{n} \log \Delta_K$, $V \cap
      \Psi(\calE(\fraka)) \neq \emptyset$.
    \end{enuma}
\end{proposition}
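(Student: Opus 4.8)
My plan is to derive both statements from Minkowski's convex body theorem applied to the lattice $\Phi(\fraka) \subset W_K$, whose covolume is $c_1 \sqrt{\Delta_K}\, N(\fraka)$ for $N(\fraka)$ the absolute norm of $\fraka$ and $c_1$ a constant depending only on $[K:\Q]$. Two consequences of Minkowski's theorem will carry most of the weight. First, whenever $t_1, \dots, t_{n+1} > 0$ satisfy $\prod_{i=1}^{n+1} t_i^{\deg\abs{\bullet}_i} \ge c_2 \sqrt{\Delta_K}\, N(\fraka)$ for a constant $c_2 \le 1$ depending only on $[K:\Q]$, then $B(\fraka, t_1, \dots, t_{n+1})$ contains a nonzero element. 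Second, every $\mu \in \calE(\fraka)$ satisfies $N(\fraka) \le \prod_{i=1}^{n+1}\abs{\mu}_i^{\deg\abs{\bullet}_i} \le \sqrt{\Delta_K}\, N(\fraka)$: the lower bound holds because $\mu\calO_K \subseteq \fraka$, and the upper bound holds because otherwise the \emph{open} box $\{\, w \in W_K \mid \abs{w_i} < \abs{\mu}_i \,\}$ would have volume exceeding the Minkowski threshold and hence contain a nonzero $f \in \fraka$ strictly dominated by $\mu$, contradicting minimality. Finally I would use a \emph{reduction step}: from any nonzero $f \in \fraka$, choosing $\mu$ in the finite set $B(\fraka,f) \setminus \{0\}$ so as to minimize $\prod_{i=1}^{n+1}\abs{\cdot}_i^{\deg\abs{\bullet}_i}$ yields a minimum of $\fraka$ with $\abs{\mu}_i \le \abs{f}_i$ for all $i$ (a nonzero element of $B(\fraka,\mu)$ strictly dominated by $\mu$ would also lie in $B(\fraka,f)$ with a strictly smaller product).

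I would treat part~(b) first, since it is the cleaner of the two. Suppose $V$ contains a sphere, and hence, by convexity of $V$, the enclosed ball, of radius $\tfrac14\sqrt{n}\log\Delta_K$ centred at some $x_0 \in \R^n$. I would set $t_i := \exp\bigl(x_{0,i} + \tfrac14\log\Delta_K\bigr)$ for $1 \le i \le n$ and pick $t_{n+1} > 0$ with $\prod_{i=1}^{n+1} t_i^{\deg\abs{\bullet}_i} = \sqrt{\Delta_K}\, N(\fraka)$; the first consequence gives a nonzero $f \in B(\fraka, t_1, \dots, t_{n+1})$, and the reduction step turns it into a minimum $\mu$ with $\abs{\mu}_i \le t_i$ for all $i$. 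Then $\log\abs{\mu}_i \le x_{0,i} + \tfrac14\log\Delta_K$ for $i \le n$, while $\abs{\mu}_{n+1} \le t_{n+1}$ combined with $\prod_{i=1}^{n+1}\abs{\mu}_i^{\deg\abs{\bullet}_i} \ge N(\fraka)$ forces $\sum_{i=1}^{n} \deg\abs{\bullet}_i\bigl(\log\abs{\mu}_i - \log t_i\bigr) \ge -\tfrac12\log\Delta_K$; as every summand here is $\le 0$, each one alone is $\ge -\tfrac12\log\Delta_K$, so $\log\abs{\mu}_i \ge x_{0,i} - \tfrac14\log\Delta_K$ as well. Hence $\Psi(\mu)$ lies in the cube of side $\tfrac12\log\Delta_K$ centred at $x_0$, whose circumradius is exactly $\tfrac14\sqrt{n}\log\Delta_K$, so $\Psi(\mu) \in V$; note that the constant $\tfrac14$ comes out precisely because $c_2 \le 1$.

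For part~(a), my plan is to cover $V$ by $O(\vol(V))$ boxes $Q$ of bounded diameter — possible because the $v_i$ are mostly orthogonal — and to bound, for each $Q$, the number of $\mu \in \calE(\fraka)$ with $\Psi(\mu) \in Q$ by a constant depending only on $[K:\Q]$, so that summing gives $O(\vol(V))$. For fixed $Q$, the norm bounds above confine each corresponding $\Phi(\mu)$ to a region of $W_K$ determined by $Q$ together with the admissible range of $\abs{\mu}_{n+1}$, a region whose volume is comparable to the covolume of $\Phi(\fraka)$; one then counts lattice points of $\Phi(\fraka)$ in this region, using mostly-orthogonality again to keep the lower-dimensional boundary terms in the count under control. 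I expect this local count to be the main obstacle: a region of bounded volume can still contain many lattice points if it is long and thin, so a volume bound alone does not suffice, and one must exploit the defining property of minima — that no nonzero element is strictly dominated by a minimum — to rule out long thin clusters of minima. Carrying this out is exactly the estimate of Buchmann in \cite{buchmann-ontheperiodlength,buchmann-habil}, whose argument I would follow for the details.
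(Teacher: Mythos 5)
The paper does not prove this proposition: it is imported verbatim from Buchmann \cite{buchmann-ontheperiodlength,buchmann-habil}, so there is no internal proof to compare against and I judge your sketch on its own terms. Your part~(b) is correct and is the standard Minkowski argument. All three of your preliminary claims hold: the Minkowski constant for the box $\{\abs{w_i} \le t_i\}$ relative to $\Phi(\fraka)$ is a power of $2/\pi$, hence your $c_2 \le 1$ is right; the two-sided bound $N(\fraka) \le \prod_i \abs{\mu}_i^{\deg \abs{\bullet}_i} \le \sqrt{\Delta_K}\, N(\fraka)$ for minima follows exactly as you say; and the reduction step is valid because $B(\fraka, f)$ is finite and any strictly dominated competitor strictly decreases the norm product. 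The bookkeeping that places $\Psi(\mu)$ in the cube of side $\tfrac12 \log \Delta_K$ about $x_0$, whose circumradius matches the radius of the inscribed ball of $V$, is also correct.

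Part~(a), by contrast, is a plan rather than a proof, and you say so yourself. The covering of $V$ by $O(\vol(V))$ boxes of bounded diameter is unproblematic (mostly-orthogonality is precisely what makes the number of unit boxes proportional to the volume), but the entire content of~(a) is the local claim that a single bounded box contains $O(1)$ points of $\Psi(\calE(\fraka))$, with constant depending only on $[K:\Q]$. You correctly diagnose why a volume-over-covolume count fails --- the admissible region in $W_K$ is long and thin, so boundary terms can dominate --- and the missing ingredient is indeed the minimality property: two minima whose images under $\Psi$ are close, and whose $(n+1)$-st logarithmic coordinates are also close (the norm bounds confine that coordinate to an interval of length $O(\log \Delta_K)$), would produce a nonzero element of $\fraka$ strictly dominated by one of them. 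Turning that observation into a uniform $O(1)$ bound per box is the real work, and you defer it to the very reference the paper cites. Since the paper makes the identical move, I do not count this as an error in context; but if the proposition were to be proved rather than quoted, part~(a) is where essentially all of the remaining effort lies.
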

  
Note that $f \mu \in \calE(f \fraka)$ for $f \in K^*$ and $\mu \in
\calE(\fraka)$, and moreover that $1 \in \calE(\calO_K)$. This implies
that $\calO_K^*$ operates on $\calE(\fraka)$ and that $\calO_K^*
\subseteq \calE(\calO_K)$. It turns out that $\calE(\fraka) /
\calO_K^*$ is finite and contains~$O(R_K)$ elements
\cite[Theorem~2.1]{buchmann-ontheperiodlength}. Moreover, note that
the map $\calE(\fraka) / \calO_K^* \to \Id(K)$, $\mu \calO_K^* \mapsto
\frac{1}{\mu} \fraka$ is a bijection between $\calE(\fraka) /
\calO_K^*$ and the set of reduced ideals equivalent to
$\fraka$. Denote this set of ideals by $\Red(\fraka)$. This allows one
to represent an element~$\mu$ of $\calE(\fraka)$ up to a root of unity
by the pair $(\frac{1}{\mu} \fraka, \Psi(\mu))$. In practice, one
stores $\frac{1}{\mu} \fraka$ together with an approximation of
$\Psi(\mu)$.
  
The set of minima of an ideal modulo units is known as the
\emph{infrastructure} of that ideal. More precisely, consider the map
$\Psi$, together with the lattice~$\Lambda_K = \Psi(\calO_K^*)$; the
map $d^\fraka : \calE(\fraka) / \calO_K^* \to \R^n / \Lambda_K$, $\mu
\mapsto \Psi(\mu) + \Lambda_K,$ respectively $d^\fraka : \Red(\fraka)
\to \R^n / \Lambda_K$, $\frac{1}{\mu} \fraka \mapsto \Psi(\mu) +
\Lambda_K,$ is called the \emph{distance map}.

We now discuss on how to search for all minima~$\mu \in \calE(\fraka)$
with $\Psi(\mu) \in V$. For that, we need the notion of neighboring
minima as described in \cite{genlagrange}. Two minima~$\mu, \mu' \in
\calE(\fraka)$ are said to be \emph{neighbors} if $f \in B(\fraka,
\mu, \mu') \setminus \{ 0 \}$ implies $\abs{f}_i = \max\{ \abs{\mu}_i,
\abs{\mu'}_i \}$ for some~$i$. This relation defines a graph structure
on $\calE(\fraka)$ and $\calE(\fraka) / \calO_K^*$, and Buchmann
showed that this graph is connected \cite{genlagrange}. Moreover,
Buchmann showed that if $\fraka$ is a reduced ideal, one can compute
the set of all neighbors of $1 \in \calE(\fraka)$ in
$O(\Delta_K^\epsilon)$~bit operations; in fact, the number of
neighbors is in $O((\log \Delta_K)^n)$.
  
Using this, one can compute the set of all minima of $\fraka$ in $V$
in $O(\vol(V) \Delta_K^\epsilon)$~bit operations. Moreover, one can
test whether $\mu \calO_K^* = \mu' \calO_K^*$ by computing
$\frac{1}{\mu} \fraka$ and $\frac{1}{\mu'} \fraka$ and comparing
these. In fact, if one works with $(\frac{1}{\mu} \fraka, \Psi(\mu))$
instead of $\mu$ directly, one can do this easily by comparing the
ideals in the representations. Another reason to use this
representation of $\calE(\fraka)/\mu_K$ is that this representation is
small: Thiel showed that one can represent a reduced ideal with at
most $([K : \Q]^2 + 1) \log_2 \sqrt{\Delta_K}$ bits
\cite[Corollary~3.7]{thiel-comprep}. Hence, the storage required to
store all minima~$\mu \in \calE(\fraka)$ with $\Phi(\mu) \in V$ is
$O(\vol(V) \Delta_K^\epsilon)$ bits.
  
We can use this to employ a baby-step giant-step strategy similar to
the one in \cite{buchmann-habil} to search for elements in $V \cap
\Lambda_K$, where $V = \sum_{i=1}^n [0, 1] v_i$. Select integers~$a_1,
\dots, a_n > 0$ and set 
\[ 
B := -\sum_{i=1}^n v_i [0, \tfrac{1}{a_i}] + S 
\quad \text{and} \quad 
G := \biggl\{ \sum_{i=1}^n \tfrac{b_i}{a_i} v_i \;\biggm|\; b_i \in
\N, \, 0 \le b_i < a_i \biggr\},
\] 
where 
\[ 
S := \{ v \in \R^n \mid \norm{v} \le \tfrac{1}{4} \sqrt{n} \log
\Delta_K \}. 
\] 
The sets $B$ and $G$ are depicted in Figure~\ref{fig:bsgs-a}.
  
Let $\calE_B := \{ (\frac{1}{\mu} \fraka, \Psi(\mu)) \mid \mu \in
\calE(\fraka), \; \Psi(\mu) \in B \}$; this set is called the
\emph{baby stock}. For every~$v \in G$, one can find at least one $\mu
\in \calE(\fraka)$ with $\Psi(\mu) \in v + S$ by
Proposition~\ref{distributionprop}~(b); choose an arbitrary such $\mu$
as $\mu_v$ and set $\calE_G := \{ (\frac{1}{\mu_v} \fraka,
\Psi(\mu_v)) \mid v \in G \}$. Finding $\mu_v$ from $v$ is called a
\emph{giant step}. Using the strategy in Section~11 of
\cite{buchmann-habil}, $(\frac{1}{\mu_v} \calO_K, \Psi(\mu_v))$ can be
computed in $O(\log \norm{v} \cdot \Delta_K^\epsilon)$~bit
operations. The elements of $\calE_B$ and $\calE_G$ are depicted in
Figure~\ref{fig:bsgs-b}.
  
\begin{figure}[ht]
  \psfrag{B}{$B$}
  \psfrag{G}{$G$}
  \psfrag{v1}{$v_2$}
  \psfrag{v2}{$v_1$}
  \begin{center}
    \subfloat[The sets $B$ and $G$, where $a_1 = a_2 = 3$]%
             {\label{fig:bsgs-a}\includegraphics[width=3.5cm]{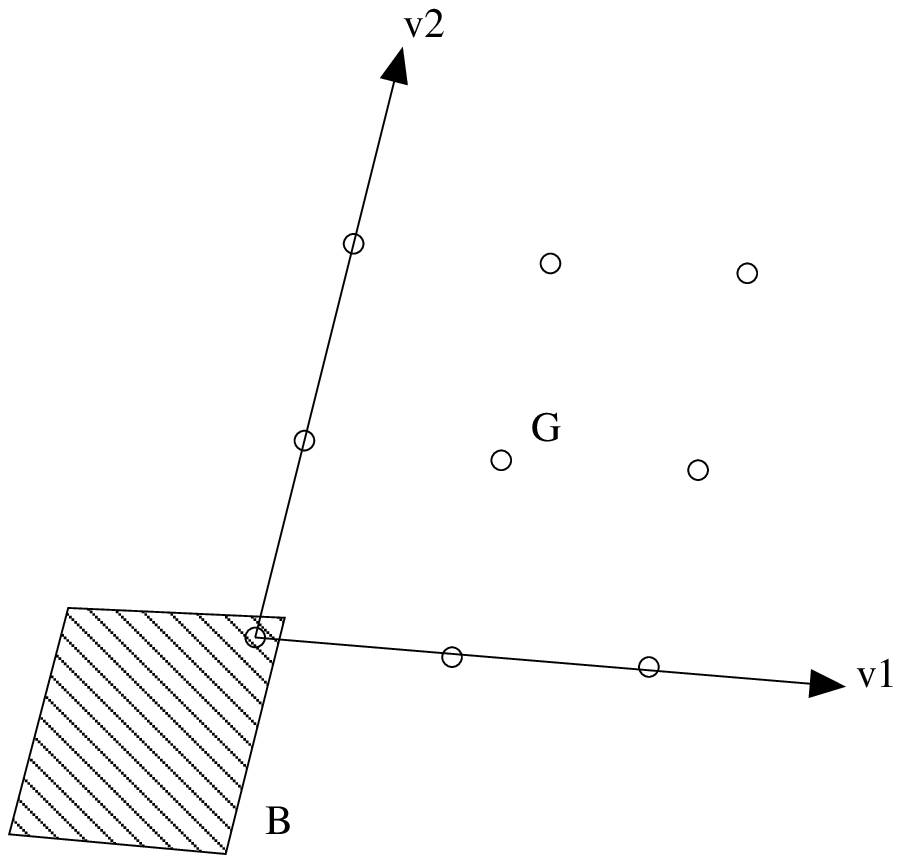}}
    \qquad\qquad
    \subfloat[The baby stock and the giant step minima]%
             {\label{fig:bsgs-b}\includegraphics[width=3.5cm]{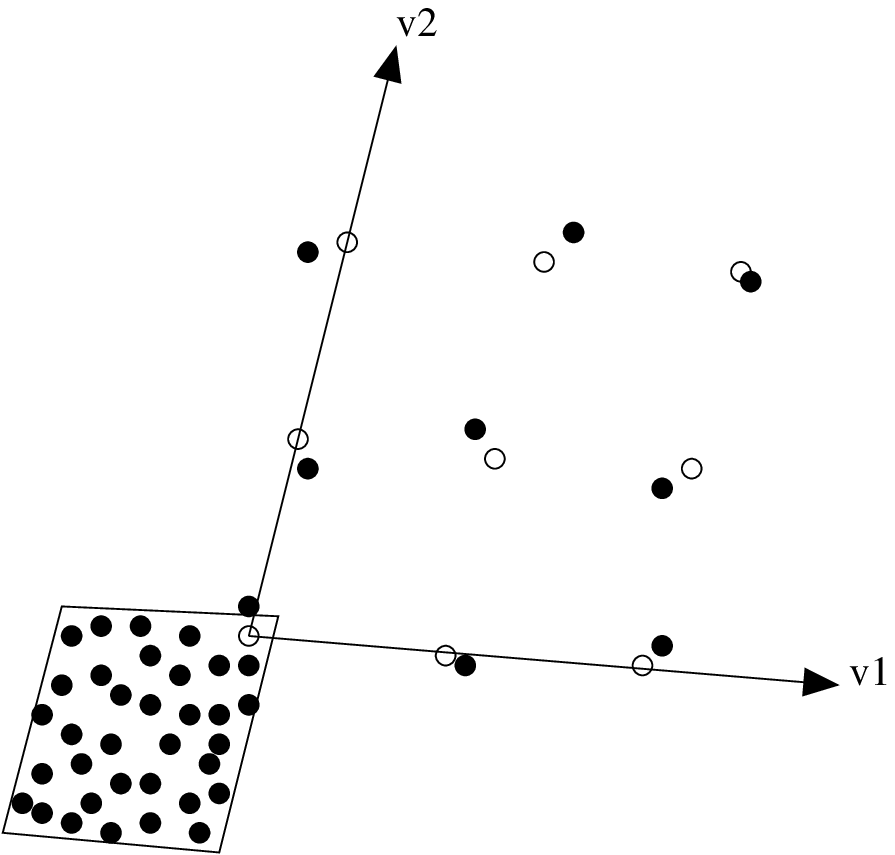}}
  \end{center}
  \caption{Visualization of the baby-step giant-step strategy}
  \label{fig:bsgs}
\end{figure}
  
\begin{proposition}
  \label{bsgsprop}
  For every~$\lambda \in V \cap \Lambda_K$, there exists an ideal
  $\fraka \in \Red(\calO_K)$ such that $(\fraka, v) \in \calE_B$,
  $(\fraka, w) \in \calE_G$ for some~$v, w \in \R^n$ such that
  $\lambda = w - v$.
    
  Conversely, given an ideal~$\fraka$ such that $(\fraka, v) \in
  \calE_B$, $(\fraka, w) \in \calE_G$ for some~$v, w \in \R^n$, then
  $w - v \in \Lambda_K$ with $w - v \in V + 2 S$.
\end{proposition}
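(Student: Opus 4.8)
The plan is to establish the two directions separately, in each case passing through the distance map and the infrastructure structure developed above. For the forward direction, let $\lambda \in V \cap \Lambda_K$. Since $\lambda \in \Lambda_K = \Psi(\calO_K^*)$, we have $d^{\calO_K}(\calO_K) = \Lambda_K \ni \lambda$, so there is a minimum $\mu \in \calE(\calO_K)$ with $\Psi(\mu) \equiv \lambda \pmod{\Lambda_K}$ and in fact $\Psi(\mu) = \lambda$ after replacing $\mu$ by $\mu\varepsilon$ for a suitable unit $\varepsilon$ (this is possible because $\calO_K^*$ acts on $\calE(\calO_K)$ with orbit images forming the full coset $\lambda + \Lambda_K$). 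Now write $\lambda = \sum_{i=1}^n c_i v_i$ with $c_i \in [0,1]$, and set $w := \lambda$. The point $w$ need not lie in $G$, but there is a unique $g = \sum \frac{b_i}{a_i} v_i \in G$ with $w \in g + \sum_i [0, \tfrac1{a_i}) v_i$, i.e.\ $w - g \in \sum_i [0,\tfrac1{a_i}) v_i \subseteq S$ once the $a_i$ are chosen large enough that this box fits inside a sphere of radius $\tfrac14\sqrt n \log\Delta_K$. Hmm — more carefully: we want $w$ and some baby-stock point to differ by $\lambda$, so instead I would choose $\mu$ so that $\Psi(\mu) = w$ lies in the region $g + S$ covered by the giant step for some $g \in G$; then $\mu_g$ (the giant-step minimum chosen for $g$) has $\Psi(\mu_g) \in g + S$ and generates some reduced ideal $\fraka := \tfrac1{\mu_g}\calO_K$, giving $(\fraka, \Psi(\mu_g)) \in \calE_G$. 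The companion baby-stock element is $\mu_g/\varepsilon$ for the unit $\varepsilon$ with $\Psi(\varepsilon) = \Psi(\mu_g) - v$ where $v$ is chosen in $B$; one checks $v \in B$ because $v = \Psi(\mu_g) - \lambda$ and $\Psi(\mu_g) \in g + S$ with $g - \lambda$ controlled. The key point making this work is that $(\fraka, \Psi(\mu_g))$ and $(\fraka, \Psi(\mu_g) - \lambda)$ have the \emph{same ideal component}: since $\lambda \in \Lambda_K = \Psi(\calO_K^*)$, multiplying $\mu_g$ by the unit $\varepsilon$ with $\Psi(\varepsilon) = \lambda$ changes $\Psi$ by $\lambda$ but leaves $\tfrac1{\mu_g\varepsilon}\calO_K = \tfrac1{\mu_g}\calO_K = \fraka$ unchanged, and $\mu_g\varepsilon$ is again a minimum of $\calO_K$. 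So $(\fraka, \Psi(\mu_g) - \lambda)$ is a legitimate pair; the remaining task is to verify $\Psi(\mu_g) - \lambda \in B$, which follows from $\Psi(\mu_g) \in g + S$ and the definition $B = -\sum v_i[0,\tfrac1{a_i}] + S$ together with $\lambda - g \in \sum v_i[0,\tfrac1{a_i}]$.

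For the converse, suppose $(\fraka, v) \in \calE_B$ and $(\fraka, w) \in \calE_G$. By definition of the baby stock there is $\mu \in \calE(\calO_K)$ with $\tfrac1\mu\calO_K = \fraka$ and $\Psi(\mu) = v$, and by definition of $\calE_G$ there is $\mu' \in \calE(\calO_K)$ with $\tfrac1{\mu'}\calO_K = \fraka$ and $\Psi(\mu') = w$. Since $\tfrac1\mu\calO_K = \tfrac1{\mu'}\calO_K$, the element $\mu'/\mu$ is a unit in $\calO_K^*$, whence $w - v = \Psi(\mu') - \Psi(\mu) = \Psi(\mu'/\mu) \in \Psi(\calO_K^*) = \Lambda_K$. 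For the location: $v \in B = -\sum_{i=1}^n v_i[0,\tfrac1{a_i}] + S$ and $w \in G + S \subseteq V + S$ (using that every element of $G$ lies in $\sum_{i=1}^n [0,1)v_i \subseteq V$ since $0 \le b_i/a_i < 1$), so $w - v \in V + S - (-\sum v_i[0,\tfrac1{a_i}] + S) = V + \sum v_i[0,\tfrac1{a_i}] + 2S$. Since $\sum_{i=1}^n v_i[0,\tfrac1{a_i}] \subseteq S$ (by the choice of $a_i$ ensuring this box is contained in the sphere of radius $\tfrac14\sqrt n\log\Delta_K$), this is contained in $V + 3S$; to land on the sharper claim $V + 2S$ I would instead absorb the $\sum v_i[0,\tfrac1{a_i}]$ box into one of the two copies of $S$ at the outset, or simply note that $B \subseteq \sum v_i[-\tfrac1{a_i},0] + S$ and track the Minkowski sum more tightly so that the net displacement from $w \in G + S$ and $-v \in \sum v_i[0,\tfrac1{a_i}] - S$ gives $w - v \in (G + \sum v_i[0,\tfrac1{a_i}]) + 2S \subseteq V + 2S$, using $G + \sum v_i[0,\tfrac1{a_i}] = \sum v_i[0,1) = $ essentially $V$.

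The main obstacle is the forward direction: it requires knowing not merely that $\lambda \in \Lambda_K$ abstractly, but that one can \emph{realize} $\lambda$ as a difference $w - v$ of a giant-step point and a baby-stock point attached to the \emph{same} reduced ideal. The crux is the observation that translating a minimum of $\calO_K$ by a unit leaves its ideal component $\tfrac1\mu\calO_K$ fixed while shifting $\Psi(\mu)$ by an arbitrary element of $\Lambda_K$; combined with Proposition~\ref{distributionprop}(b) guaranteeing that every giant-step box $g + S$ actually contains a minimum, this forces the desired pair to exist once the boxes $\sum v_i[0,\tfrac1{a_i}]$ are small enough to fit inside spheres of radius $\tfrac14\sqrt n\log\Delta_K$ (which is exactly the constraint that will later dictate the admissible range of the $a_i$). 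Everything else is bookkeeping with Minkowski sums of the boxes and spheres involved.
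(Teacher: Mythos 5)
Your final argument coincides with the paper's: decompose $\lambda = g + (\lambda - g)$ with $g \in G$ and $\lambda - g \in \sum_i [0,\tfrac{1}{a_i}] v_i$, take the giant-step minimum $\mu_g$, set $v = \Psi(\mu_g) - \lambda \in B$, and use the unit $\varepsilon$ with $\Psi(\varepsilon) = \lambda$ (which fixes the ideal component $\tfrac{1}{\mu_g}\calO_K$) to see that $(\tfrac{1}{\mu_g}\calO_K, v) \in \calE_B$; the converse via $\tfrac{1}{\mu}\fraka = \tfrac{1}{\mu'}\fraka \Leftrightarrow \Psi(\mu') - \Psi(\mu) \in \Lambda_K$ and the Minkowski-sum bookkeeping is likewise the paper's. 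The side condition you flirt with (that the boxes $\sum_i [0,\tfrac{1}{a_i}]v_i$ fit inside $S$) is unnecessary and correctly dropped in your tightened version, so the proof is correct and essentially identical to the paper's.
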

  
\begin{proof}
  First, note that $\frac{1}{\mu} \fraka = \frac{1}{\mu'} \fraka$ if,
  and only if, $\mu^{-1} \mu' \in \calO_K^*$; therefore,
  $\frac{1}{\mu} \fraka = \frac{1}{\mu'} \fraka$ if, and only if,
  $\Psi(\mu') - \Psi(\mu) \in \Psi(\calO_K^*) = \Lambda_K$.
    
  Now if $\lambda \in V \cap \Lambda_K$, we can write $\lambda =
  \sum_{i=1}^n \lambda_i v_i$ with $\lambda_i \in [0, 1]$. Write
  $\lambda_i = \mu_i + \frac{b_i}{a_i}$ with $b_i \in \N$, $\mu_i \in
  [0, \frac{1}{a_i}]$. Set $w := \sum_{i=1}^n \frac{b_i}{a_i} v_i$;
  then $(\frac{1}{\mu_w} \fraka, \Psi(\mu_w)) \in \calE_G$ and
  $\hat{w} := \Psi(\mu_w) - w \in S$. Now $v := -\sum_{i=1}^n \mu_i
  v_i + \hat{w} \in B$; we have to show that $(\frac{1}{\mu_w} \fraka,
  v) \in \calE_B$, as $\Psi(\mu_w) - v = w + \hat{w} - v =
  \sum_{i=1}^n \frac{a_i}{b_i} v_i + \hat{w} + \sum_{i=1}^n \mu_i v_i
  - \hat{w} = \lambda$.
    
  For that, let $\varepsilon \in \calO_K^*$ with $\Psi(\varepsilon) =
  \lambda$. Now $\Psi(\varepsilon) = \lambda = \Psi(\mu_w) - v$,
  whence $v = \Psi(\mu_w \varepsilon^{-1})$. But $\frac{1}{\mu_w}
  \fraka = \frac{1}{\mu_w \varepsilon^{-1}} \fraka$, whence
  $(\frac{1}{\mu_w} \fraka, v) = (\frac{1}{\mu_w \varepsilon^{-1}}
  \fraka, \Psi(\mu_w \varepsilon^{-1})) \in \calE_B$. \qed
\end{proof}
  
Hence, to find all elements in $V \cap \Lambda_K$, one can enumerate
and store $\calE_B,$ enumerate all elements $v \in G$, compute a
corresponding $\mu_v$, and see if $(\frac{1}{\mu_v} \fraka, v) \in
\calE_B$ for some $v \in \R^n$. If that is the case, one obtains an
element of $\Lambda_K \cap (V + S)$, and the proposition shows that
every element of $\Lambda_K \cap V$ can be obtained in this way. As in
\cite{buchmann-habil}, this yields the following.
\begin{corollary}
  \label{bsgscorollary}
  Let $R = \vol(V)$. The strategy sketched above computes all elements
  in $V \cap \Lambda_K$ in $O( (R \prod_{i=1}^n n_i^{-1} +
  \prod_{i=1}^n n_i \cdot \log \max\limits_{i=1,\dots,n} \norm{v_i} )
  \Delta_K^\epsilon)$~bit operations and requires $O( R \prod_{i=1}^n
  n_i^{-1} \cdot \Delta_K^\epsilon)$~bits of storage.
\end{corollary}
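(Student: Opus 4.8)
The plan is to analyze the two phases of the baby-step giant-step strategy separately and add their costs. First I would treat the baby stock. By Proposition~\ref{distributionprop}~(a), applied to the region $B = -\sum_{i=1}^n v_i [0,\tfrac{1}{n_i}] + S$, the set $\Psi(\calE(\fraka)) \cap B$ has $O(\vol(B))$ elements, and since $\vol(S) = O((\log \Delta_K)^n) = O(\Delta_K^\epsilon)$ while $\vol(-\sum_i v_i[0,\tfrac{1}{n_i}]) = R \prod_i n_i^{-1}$, a Minkowski-sum volume estimate gives $\vol(B) = O(R \prod_i n_i^{-1} \cdot \Delta_K^\epsilon)$. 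Enumerating all these minima via the neighbor graph of $1 \in \calE(\fraka)$ costs $O(\Delta_K^\epsilon)$ bit operations per minimum (Buchmann, via \cite{genlagrange}), and storing each pair $(\tfrac{1}{\mu}\fraka, \Psi(\mu))$ costs $O(\Delta_K^\epsilon)$ bits by Thiel's bound \cite[Corollary~3.7]{thiel-comprep}; hence $\calE_B$ is built and stored in $O(R \prod_i n_i^{-1} \cdot \Delta_K^\epsilon)$ bit operations and the same number of bits. I would also organize $\calE_B$ in a lookup structure keyed by the reduced ideal, so that membership queries cost $O(\Delta_K^\epsilon)$.

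Next I would handle the giant steps. The set $G$ has exactly $\prod_{i=1}^n n_i$ elements, and for each $v \in G$ one giant step produces $(\tfrac{1}{\mu_v}\fraka, \Psi(\mu_v))$ in $O(\log \norm{v} \cdot \Delta_K^\epsilon)$ bit operations by the strategy of Section~11 of \cite{buchmann-habil}; since every $v \in G$ lies in $V$, we have $\norm{v} \le \sum_i \norm{v_i} \le n \max_i \norm{v_i}$, so each giant step costs $O(\log \max_i \norm{v_i} \cdot \Delta_K^\epsilon)$, for a total of $O(\prod_i n_i \cdot \log \max_i \norm{v_i} \cdot \Delta_K^\epsilon)$. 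For each such $v$, a lookup in $\calE_B$ for the ideal $\tfrac{1}{\mu_v}\fraka$ costs $O(\Delta_K^\epsilon)$ and, when successful, yields a baby $(\frac{1}{\mu_v}\fraka, w) \in \calE_B$ from which we extract $\Psi(\mu_v) - w \in \Lambda_K$ (it lies in $\Lambda_K$ because two minima of $\fraka$ have the same associated reduced ideal iff their $\Psi$-images differ by an element of $\Lambda_K$). Adding the two phase costs gives the claimed $O((R \prod_i n_i^{-1} + \prod_i n_i \cdot \log \max_i \norm{v_i}) \Delta_K^\epsilon)$ bit operations, with storage dominated by $\calE_B$, i.e. $O(R \prod_i n_i^{-1} \cdot \Delta_K^\epsilon)$ bits.

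It remains to argue correctness, i.e. that this procedure returns exactly $V \cap \Lambda_K$ (up to the $2S$-fattening). This is precisely Proposition~\ref{bsgsprop}: the forward direction guarantees every $\lambda \in V \cap \Lambda_K$ is detected as $w - v$ for a matching ideal appearing in both $\calE_B$ and $\calE_G$, and the converse guarantees every detected pair yields a genuine element of $\Lambda_K$ lying in $V + 2S$. I would note that the mild over-collection (elements in $(V + 2S) \setminus V$) is harmless: one either keeps them, since $\vol(2S) = O(\Delta_K^\epsilon)$ so only $O(\Delta_K^\epsilon)$ extra elements arise, or filters them out in $O(\Delta_K^\epsilon)$ time each. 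Finally I would invoke the mostly-orthogonal hypothesis on $(v_1, \dots, v_n)$ to identify $\vol(V) = O(\prod_i \norm{v_i})$ with $R$, keeping all volume-to-bit-operation translations clean.

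The main obstacle I expect is bookkeeping the Minkowski-sum volume bound for $B$ carefully enough that the $\Delta_K^\epsilon$ factors absorb the $\vol(S) = O((\log\Delta_K)^n)$ contribution uniformly in the basis $(v_1,\dots,v_n)$ — in particular making sure the "mostly orthogonal" assumption is genuinely what is needed so that $\vol(B) \le \vol(-\sum_i v_i[0,\tfrac1{n_i}]) + O(\vol(S)\cdot(\text{lower-order terms}))$ does not hide a dependence on $\norm{v_i}$ that would spoil the stated complexity. Everything else is a direct assembly of Propositions~\ref{distributionprop} and~\ref{bsgsprop} together with the per-operation cost bounds quoted from \cite{buchmann-habil,genlagrange,thiel-comprep}.
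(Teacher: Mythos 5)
Your proposal is correct and follows essentially the same route as the paper's (much terser) proof: bound $\vol(B)$ to get the baby-stock cost and storage via Proposition~\ref{distributionprop}~(a) and Thiel's bound, charge each of the $\prod_i n_i$ giant steps $O(\log\max_i\norm{v_i}\cdot\Delta_K^\epsilon)$, and invoke Proposition~\ref{bsgsprop} for correctness. The extra care you take with the Minkowski-sum volume of $B$ and the $V+2S$ over-collection is detail the paper silently elides, not a departure in method.
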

Note that the running time is minimized if $\prod_{i=1}^n n_i \approx \sqrt{R}$.
  
\begin{proof}
  The storage requirements follow from
  Proposition~\ref{distributionprop}~(a) and
  \cite[Corollary~3.7]{thiel-comprep}. Using the enumeration technique
  by Buchmann \cite{genlagrange,buchmann-habil}, one can compute
  $\calE_B$ in $O( R \prod_{i=1}^n n_i^{-1} \cdot \Delta_K^\epsilon)$~bit
  operations since $\vol(B) = O(R \cdot \prod_{i=1}^n n_i^{-1} \cdot
  \Delta_K^\epsilon)$. Finally, one can compute the elements in
  $\calE_G$ in $O(\abs{\calE_G} \Delta_K^\epsilon \cdot \log \max
  \norm{v_i})$~bit operations. \qed
\end{proof}
  
Finally, we discuss how to test whether $v \in \Lambda_K$ for some $v
\in \R^n$. We use the giant step strategy mentioned above to compute
some $\mu \in \calE(\calO_K)$ with $\Psi(\mu) \in v + S$. Then, one
uses the above strategy to enumerate all minima $\mu' \in
\calE(\frac{1}{\mu} \calO_K)$ with $\Psi(\mu') \in S$ to check whether
a minimum~$\mu'$ with $\Psi(\mu') + \Psi(\mu) = v$ and $\frac{1}{\mu'}
(\frac{1}{\mu} \calO_K) = \calO_K$ exists.
\begin{lemma}
  Let $\mu \in \calE(\calO_K)$ and $v \in \R^n$. Then there exists a
  minimum~$\mu' \in \calE(\frac{1}{\mu} \calO_K)$ with $\Psi(\mu') +
  \Psi(\mu) = v$ such that $\frac{1}{\mu' \mu} \calO_K = \calO_K$ if,
  and only if, $v \in \Lambda_K$.
\end{lemma}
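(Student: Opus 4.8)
The plan is to reduce the statement to a purely multiplicative one by using that $\Psi$ is a group homomorphism $K^* \to (\R^n,+)$, i.e. $\Psi(fg) = \Psi(f) + \Psi(g)$ since $\log\abs{fg}_i = \log\abs{f}_i + \log\abs{g}_i$. Under this observation the requirement $\Psi(\mu') + \Psi(\mu) = v$ becomes $\Psi(\mu'\mu) = v$, and the requirement $\frac{1}{\mu'\mu}\calO_K = \calO_K$ is, by definition of a fractional ideal, equivalent to $\mu'\mu \in \calO_K^*$ (a fractional ideal $\frac1g\calO_K$ equals $\calO_K$ exactly when $g$ is a unit). So the existence assertion in the lemma says precisely: there is a minimum $\mu' \in \calE(\tfrac1\mu\calO_K)$ with $\varepsilon := \mu'\mu \in \calO_K^*$ and $\Psi(\varepsilon) = v$, and we must show this holds if and only if $v \in \Lambda_K = \Psi(\calO_K^*)$.

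For the ``only if'' direction I would argue as follows: given such a $\mu'$, the membership $\mu' \in \tfrac1\mu\calO_K$ gives $\mu'\mu \in \calO_K$, and together with $\frac{1}{\mu'\mu}\calO_K = \calO_K$ this forces $\varepsilon := \mu'\mu \in \calO_K^*$. Then $v = \Psi(\mu') + \Psi(\mu) = \Psi(\mu'\mu) = \Psi(\varepsilon) \in \Psi(\calO_K^*) = \Lambda_K$, as desired.

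For the ``if'' direction, assume $v \in \Lambda_K$ and choose $\varepsilon \in \calO_K^*$ with $\Psi(\varepsilon) = v$. Put $\mu' := \mu^{-1}\varepsilon \in K^*$. Since $\calO_K^* \subseteq \calE(\calO_K)$ we have $\varepsilon \in \calE(\calO_K)$, and applying the general fact $f\nu \in \calE(f\fraka)$ (for $f \in K^*$, $\nu \in \calE(\fraka)$) with $f = \mu^{-1}$ and $\fraka = \calO_K$ yields $\mu' = \mu^{-1}\varepsilon \in \calE(\mu^{-1}\calO_K) = \calE(\tfrac1\mu\calO_K)$. Moreover $\mu'\mu = \varepsilon \in \calO_K^*$, so $\frac{1}{\mu'\mu}\calO_K = \calO_K$, and $\Psi(\mu') + \Psi(\mu) = \Psi(\varepsilon) = v$; hence $\mu'$ has all the required properties.

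Since the whole argument is essentially bookkeeping with the homomorphism property of $\Psi$ and the natural $\calO_K^*$-action on the set of minima, there is no genuine obstacle. The only point one must keep straight is the translation between the lattice language (elements of $\Lambda_K$, i.e. $\Psi$-images of units) and the ideal/minimum language used in this section, namely that ``$\frac1g\calO_K = \calO_K$'' is the same as ``$g \in \calO_K^*$''; this is what makes the equivalence immediate and, together with Proposition~\ref{distributionprop}~(b), justifies the membership test described before the lemma.
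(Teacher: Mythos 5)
Your proof is correct and follows essentially the same route as the paper's: the forward direction reads off $\mu'\mu \in \calO_K^*$ from the ideal condition and applies the homomorphism property of $\Psi$, and the converse sets $\mu' := \varepsilon/\mu$ and uses $\calO_K^* \subseteq \calE(\calO_K)$ together with the fact that $f\nu \in \calE(f\fraka)$. The only difference is that you spell out why $\tfrac{1}{\mu'\mu}\calO_K = \calO_K$ is equivalent to $\mu'\mu \in \calO_K^*$, which the paper takes as immediate.
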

  
\begin{proof}
  First, assume that $\Psi(\mu') + \Psi(\mu) = v$ and $\frac{1}{\mu'
  \mu} \calO_K = \calO_K$. Then $\mu \mu' \in \calO_K^*$ and $v =
  \Psi(\mu \mu') \in \Lambda_K$. Conversely, assume that $v \in
  \Lambda_K$, say $v = \Psi(\varepsilon)$ with $\varepsilon \in
  \calO_K^*$. But $\varepsilon \in \calE(\calO_K)$ and $\mu' :=
  \frac{\varepsilon}{\mu} \in \calE(\frac{1}{\mu} \calO_K)$, and
  $\Psi(\mu) + \Psi(\mu') = \Psi(\varepsilon) = v.$\qed
\end{proof}
  
Note that one can compute $\mu_v$ in $O(\log \norm{v} \cdot
\Delta_K^\epsilon)$~bit operations, and $S \cap \Psi(\frac{1}{\mu_v}
\calO_K)$ contains $O(\Delta_K^\epsilon)$ elements by
Proposition~\ref{distributionprop}~(a). Hence we obtain the following corollary.
\begin{corollary}
  \label{testingcorr}
  Given $v \in \R^n$, one can test whether $v \in \Lambda_K$ in
  $O(\log \norm{v} \cdot \Delta_K^\epsilon)$~bit operations and
  $O(\Delta_K^\epsilon)$~bits of storage. \qed
\end{corollary}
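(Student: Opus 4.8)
The plan is to make precise the algorithm already outlined before the statement, deduce its correctness from the Lemma just proved, and then account for the cost of each step using the cited results on giant steps, on the distribution of minima, and on the size of reduced-ideal representations.

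First I would fix the algorithm explicitly: on input $v \in \R^n$, perform a single giant step to produce a minimum $\mu = \mu_v \in \calE(\calO_K)$ with $\Psi(\mu) \in v + S$; form the reduced ideal $\frac{1}{\mu}\calO_K$ together with $\Psi(\mu)$; enumerate, via Buchmann's neighbor graph, all minima $\mu' \in \calE(\frac{1}{\mu}\calO_K)$ with $\Psi(\mu') \in S$; and for each such $\mu'$ test whether $\Psi(\mu') = v - \Psi(\mu)$ and whether $\frac{1}{\mu'\mu}\calO_K = \calO_K$, returning ``$v \in \Lambda_K$'' iff some $\mu'$ passes both tests. Correctness is exactly the Lemma just proved, once one checks that a witness $\mu'$ for ``$v \in \Lambda_K$'' is actually encountered during the enumeration: since $\Psi(\mu) = v + s$ with $s \in S$, any such witness satisfies $\Psi(\mu') = v - \Psi(\mu) = -s$, and $S$ is a ball centered at the origin, hence $-s \in S$. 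Conversely, every $\mu'$ returned by the algorithm satisfies the hypotheses of that Lemma, so the output is correct.

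Next comes the complexity bookkeeping. Computing $\mu_v$ from $v$ costs $O(\log\norm{v}\cdot\Delta_K^\epsilon)$ bit operations by the giant-step strategy of Section~11 of \cite{buchmann-habil}, and $(\frac{1}{\mu_v}\calO_K,\Psi(\mu_v))$ takes $O(\Delta_K^\epsilon)$ bits to store by \cite[Corollary~3.7]{thiel-comprep}. The set $S$ is an $n$-ball of radius $\tfrac14\sqrt{n}\log\Delta_K$, so $\vol(S) = O((\log\Delta_K)^n) = O(\Delta_K^\epsilon)$; hence by Proposition~\ref{distributionprop}~(a) there are $O(\Delta_K^\epsilon)$ minima $\mu'$ of $\frac{1}{\mu}\calO_K$ with $\Psi(\mu') \in S$, and Buchmann's neighbor enumeration produces them all in $O(\Delta_K^\epsilon)$ bit operations. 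Each comparison $\Psi(\mu') = v - \Psi(\mu)$ and each ideal equality test $\frac{1}{\mu'\mu}\calO_K = \calO_K$ is arithmetic and comparison on reduced ideals, costing $O(\Delta_K^\epsilon)$ bit operations, and there are $O(\Delta_K^\epsilon)$ of them; storing the enumerated minima needs $O(\Delta_K^\epsilon)$ bits, and one may also just stream through them. Summing, the giant step dominates, which yields $O(\log\norm{v}\cdot\Delta_K^\epsilon)$ bit operations and $O(\Delta_K^\epsilon)$ bits of storage.

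The conceptual content lies entirely in the preceding Lemma and in Proposition~\ref{distributionprop}, so I do not expect a genuine obstacle; the one point deserving care is the cost of the giant step, where the bound $O(\log\norm{v}\cdot\Delta_K^\epsilon)$ --- linear in $\log\norm{v}$ rather than polynomial in $\norm{v}$ --- is imported from the analysis in \cite{buchmann-habil} and is what keeps the running time sublinear in $\norm{v}$. The remaining ingredients are the volume estimate $\vol(S) = O(\Delta_K^\epsilon)$ and the observation that a ball centered at the origin is symmetric.
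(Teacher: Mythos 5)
Your proposal is correct and follows exactly the paper's route: the corollary is stated with an immediate \qed because it is just the giant step ($O(\log\norm{v}\cdot\Delta_K^\epsilon)$ bit operations), the enumeration of the $O(\Delta_K^\epsilon)$ minima of $\tfrac{1}{\mu_v}\calO_K$ in $S$ via Proposition~\ref{distributionprop}~(a), and correctness from the preceding lemma. Your explicit check that the witness $\mu'$ satisfies $\Psi(\mu') = -s \in S$ by symmetry of the ball is a detail the paper leaves implicit, and it is a worthwhile addition.
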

  
We have seen how we can deploy a baby-step giant-step strategy to
search for elements in $V \cap \Lambda_K$. Moreover, we saw how to
test whether a given~$v \in \R^n$ is an element of $\Lambda_K$.  These
two methods are the required computational tools to translate the
lattice maximization strategy of Corollary~\ref{latticemaxcorollary}
into an algorithm.

\section{The Algorithm}
\label{sec:thealgsect}
  
The algorithm is in a rather straightforward way based on
Corollary~\ref{latticemaxcorollary} combined with a baby-step
giant-step strategy as outlined in Section~\ref{sec:bsgssearch}. It is
formalized in Algorithm~\ref{latticemaximizationalg}.  The correctness
of this algorithm follows directly from
Corollaries~\ref{latticemaxcorollary} and \ref{bsgscorollary}.
    
  \begin{algorithm}[ht]
    \caption{Find~$\Lambda_K \subset \R^n$, given a sublattice~$\Lambda'$ of full rank.}
    \label{latticemaximizationalg}
    \begin{algorithmic}[1]
      \REQUIRE A basis $(v_1, \dots, v_n)$ of $\Lambda' \subseteq \Lambda_K$, a parameter~$B \ge 1$,
      a parameter~$\delta \in (0, 1)$.
      \ENSURE A basis of $\Lambda_K$. 
      \STATE\label{alg1:step1} Reduce the basis $(v_1, \dots, v_n)$, i.e. make it mostly orthogonal.
      \FOR{all primes $p$ with $2 \le p \le B$}\label{alg1:step2}
        \FOR{all $(a_1, \dots, a_{n-1}) \in \{ 0, \dots, p-1 \}^{n-1}$}\label{alg1:step2a}
          \STATE Set $v = \frac{1}{p} (a_1 v_1 + \dots + a_{n-1} v_{n-1} + v_n)$.
          \IF{$v \in \Lambda_K$}
            \STATE Compute a reduced basis $(\hat{v}_1, \dots, \hat{v}_n)$ of $\ggen{ v_1, \dots,
            v_n, v }_\Z$.
            \STATE Replace $(v_1, \dots, v_n)$ by $(\hat{v}_1, \dots, \hat{v}_n)$ and restart the
            loop in line~\ref{alg1:step2a}.
          \ENDIF
        \ENDFOR\label{alg1:step2aend}
      \ENDFOR\label{alg1:step2end}
      \STATE\label{alg1:step3} Determine $a_1, \dots, a_n \in \N_{>0}$ such that $\prod_{i=1}^n a_i \approx
      (\frac{1}{B} \det \Lambda')^{1 - \delta}$.
      \FOR[$S$ as in Section~\ref{sec:bsgssearch}]{all $\mu \in \calE(\calO_K)$ with $\Psi(\mu)
      \in \sum_{i=1}^n v_i [-\tfrac{1}{a_i}, 0] + S$}\label{alg1:step4}
        \STATE Store $(\frac{1}{\mu} \calO_K, \Psi(\mu))$ in the set $\calE_B$.
        \IF{some $(\calO_K, v) \in \calE_B$ with $v \not\in \ggen{v_1, \dots, v_n}_\Z$ is found}
          \STATE Compute a reduced basis $(\hat{v}_1, \dots, \hat{v}_n)$ of $\ggen{ v_1, \dots, v_n,
          v }_\Z$.
          \STATE\label{alg1:step4restart} Replace $(v_1, \dots, v_n)$ by $(\hat{v}_1, \dots,
          \hat{v}_n)$ and go back to line~\ref{alg1:step3}.
        \ENDIF
      \ENDFOR\label{alg1:step4end}
      \FOR{all $w \in \{ \sum_{i=1}^n \frac{a_i}{b_i} v_i \mid a_i \in \N, \; 0 \le a_i < b_i
      \}$}\label{alg1:step5}
        \STATE Compute some $(\frac{1}{\mu} \calO_K, \Psi(\mu))$ with $\mu \in
        \calE(\calO_K)$ and $\Psi(\mu) \in w + S$.
        \IF{$(\frac{1}{\mu} \calO_K, v)$ is found in $\calE_B$ for some $v \in \R^n$ with $\Psi(\mu)
        - v \not\in \ggen{v_1, \dots, v_n}_\Z$}
          \STATE Compute a reduced basis $(\hat{v}_1, \dots, \hat{v}_n)$ of $\ggen{ v_1, \dots, v_n,
          \Psi(\mu) - v }_\Z$.
          \STATE Replace $(v_1, \dots, v_n)$ by $(\hat{v}_1, \dots, \hat{v}_n)$ and go back to
          line~\ref{alg1:step3}.
        \ENDIF
      \ENDFOR\label{alg1:step5end}
      \RETURN $(v_1, \dots, v_n)$.
    \end{algorithmic}
  \end{algorithm}

During the course of the algorithm, we try to keep the basis
vectors~$v_1, \dots, v_n$ as orthogonal as possible; in that case, we
have $\abs{\det(v_1, \dots, v_n)} \approx \prod_{i=1}^n
\norm{v_i}$. Such a basis can be computed as in Algorithm~16.10 of
\cite{vzgathen-moderncomputeralgebra} and is called a \emph{reduced}
basis. 
  
We now analyze the asymptotic running time and memory consumption of
Algorithm~\ref{latticemaximizationalg}. Recall that $[K : \Q] = O(1)$;
note that the $O$-constants are assumed to be exponentially dependent
on $n$ (compare \cite[p.~5]{buchmann-habil}).

\begin{theorem}
  \label{algcomplexthm}
  Algorithm~\ref{latticemaximizationalg} requires \[ O\Bigl( \bigl(
  (\tfrac{1}{B} \det \Lambda')^\delta + (\tfrac{1}{B} \det
  \Lambda')^{1 - \delta} + B^n (\log B)^{-1} \bigr) (\Delta_K \det
  \Lambda')^\epsilon \Bigr) \] bit operations and $O( (\tfrac{1}{B}
  \det \Lambda')^\delta \Delta_K^\epsilon)$ bits of storage.
\end{theorem}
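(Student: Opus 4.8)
The proof is a bookkeeping argument: we account for the cost of each of the three main blocks of Algorithm~\ref{latticemaximizationalg} — the prime-sieving block (lines~\ref{alg1:step2}--\ref{alg1:step2end}), the baby-stock enumeration (lines~\ref{alg1:step4}--\ref{alg1:step4end}), and the giant-step loop (lines~\ref{alg1:step5}--\ref{alg1:step5end}) — and show that between successive successful lattice enlargements each block is run only $O(\Delta_K^\epsilon)$ times. The key structural fact is that every time a nontrivial $v \in \Lambda_K$ is found, $\Lambda'$ is replaced by a strictly larger sublattice $\Lambda'' \subseteq \Lambda_K$, so the index $i_{\Lambda'}$ drops by at least a factor of $2$; since $i_{\Lambda'} = O(\det \Lambda')$ by Remak's bound, there are at most $O(\log \det \Lambda')= O((\det\Lambda')^\epsilon)$ such enlargements over the whole run. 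This absorbs all the restarts into the $\epsilon$ in the exponent, so it suffices to bound the cost of one pass of each block.

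\medskip
First I would treat the prime block. By Corollary~\ref{latticemaxcorollary}, $\sum_{i=1}^t |\tilde S_i| = O(B^n/\log B)$, and for each candidate $v$ the membership test $v \in \Lambda_K$ costs $O(\log\norm{v}\cdot \Delta_K^\epsilon)$ bit operations by Corollary~\ref{testingcorr}. The vectors $v$ live in the fundamental parallelepiped of the reduced basis, whose vectors have norm $O((\det\Lambda')^{O(1)}\Delta_K^{O(1)})$ after reduction (this is where I use that the basis is mostly orthogonal together with Sands' bound $R_K = O(\Delta_K^{1/2+\epsilon})$), so $\log\norm v = O((\Delta_K\det\Lambda')^\epsilon)$; each basis-reduction step after a hit costs $O((\Delta_K\det\Lambda')^\epsilon)$ as well. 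Multiplying, one pass of the prime block costs $O\bigl(B^n(\log B)^{-1}(\Delta_K\det\Lambda')^\epsilon\bigr)$, and with $O((\det\Lambda')^\epsilon)$ restarts this is still $O\bigl(B^n(\log B)^{-1}(\Delta_K\det\Lambda')^\epsilon\bigr)$, matching the third summand in the claimed bound (absorbing the $(\log B)^{-1}$ into a clean statement or keeping it, since $B \le \det\Lambda'$).

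\medskip
Next the baby-step giant-step block. After the prime block, Corollary~\ref{latticemaxcorollary} tells us the remaining search region is $V_B$ with $\vol(V_B) = \tfrac1B\det\Lambda'$. In line~\ref{alg1:step3} the $a_i$ are chosen so that $\prod_i a_i \approx (\tfrac1B\det\Lambda')^{1-\delta}$; then Corollary~\ref{bsgscorollary} with $R = \vol(V_B)$ gives a cost of $O\bigl((R\prod a_i^{-1} + \prod a_i\cdot \log\max\norm{v_i})\Delta_K^\epsilon\bigr)$ bit operations and $O(R\prod a_i^{-1}\cdot\Delta_K^\epsilon)$ bits of storage. Substituting $R\prod a_i^{-1} = (\tfrac1B\det\Lambda')^{\delta}$ and $\prod a_i = (\tfrac1B\det\Lambda')^{1-\delta}$, and again using $\log\max\norm{v_i} = O((\Delta_K\det\Lambda')^\epsilon)$, gives exactly the $(\tfrac1B\det\Lambda')^\delta + (\tfrac1B\det\Lambda')^{1-\delta}$ summands in time and the $(\tfrac1B\det\Lambda')^\delta$ term in storage. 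Here I must be slightly careful that the search set in the algorithm is $\sum v_i[-\tfrac1{a_i},0] + S$ rather than exactly the $B$ of Section~\ref{sec:bsgssearch}, but the argument of Proposition~\ref{bsgsprop} and Corollary~\ref{bsgscorollary} applies verbatim to $V_B$ in place of $V$; the enlarged region $V_B + 2S$ only costs an extra $O(\Delta_K^\epsilon)$ factor in volume since $\vol(S) = O(\Delta_K^\epsilon)$.

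\medskip
Finally I would assemble: sum the three per-pass bounds, multiply by the $O((\det\Lambda')^\epsilon)$ bound on the number of restarts, and fold every $\log$, every $(\log B)^{-1}$, every constant depending on $n$, and Sands'/Remak's contributions into the single $(\Delta_K\det\Lambda')^\epsilon$ factor, yielding the stated time bound; the storage bound is dominated by the baby stock $\calE_B$, giving $O((\tfrac1B\det\Lambda')^\delta\Delta_K^\epsilon)$. \textbf{The main obstacle} is the restart bookkeeping: I must argue cleanly that restarting a loop does not cause a blow-up — concretely, that each restart is triggered by a genuine index drop, that the total number of index drops is $O((\det\Lambda')^\epsilon)$, and that re-running a block with a \emph{smaller} $\det\Lambda'$ only makes it cheaper — so that the naive "cost per pass $\times$ number of passes" estimate is legitimate despite the nested \texttt{for}-loops and the \texttt{go back to} statements. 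Everything else is substitution into the two corollaries already proved.
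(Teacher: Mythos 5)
Your proposal is correct and follows essentially the same route as the paper's proof: decompose the cost into the prime-testing block and the baby-step/giant-step block, bound each pass via Corollaries~\ref{latticemaxcorollary}, \ref{testingcorr} and \ref{bsgscorollary} with $R = \vol(V_B) = \tfrac{1}{B}\det\Lambda'$ and $\prod a_i \approx (\tfrac{1}{B}\det\Lambda')^{1-\delta}$, and then multiply by the $O(\log_2 \det\Lambda') = O((\det\Lambda')^\epsilon)$ bound on the number of restarts coming from the index halving at each enlargement together with Remak's bound. Your explicit handling of the restart bookkeeping and of the $\log\norm{v}$ factors matches what the paper does (the paper additionally cites Theorem~16.11 of \cite{vzgathen-moderncomputeralgebra} to dismiss the basis-reduction cost, which you also address).
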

  
\begin{proof}
  First, assume that $\Lambda' = \Lambda_K$, i.e. no element in
  $\Lambda_K \setminus \Lambda'$ is found $\Lambda'$ is not replaced
  by a larger sublattice of $\Lambda_K.$
    
  The loop in lines~\ref{alg1:step2}--\ref{alg1:step2end} requires
  $O(\frac{B^n}{\log B} (\det \Lambda')^\epsilon
  \Delta_K^\epsilon)$~bit operations as well as
  $O(\Delta_K^\epsilon)$~bits of storage by the
  Corollaries~\ref{latticemaxcorollary} and \ref{testingcorr}.  Note
  that the primes required can be computed with the Sieve of
  Eratosthenes in time $O(B^{1 + \epsilon})$ bit operations, so this
  part of the computation does not affect the overall asymptotic
  running time.
    
  The value~$R$ of Corollary~\ref{bsgscorollary} is in $O(\frac{1}{B} \det \Lambda')$ by
  Corollary~\ref{latticemaxcorollary}. Hence, by Corollary~\ref{bsgscorollary}, the loops in
  lines~\ref{alg1:step4}--\ref{alg1:step4end} and \ref{alg1:step5}--\ref{alg1:step5end} require $O(
  (\frac{1}{B} \det \Lambda' \cdot (\frac{1}{B} \det \Lambda')^{-(1 - \delta)} + (\frac{1}{B} \det
  \Lambda')^{1 - \delta}) \Delta_K^\epsilon) = O( ((\frac{1}{B} \det \Lambda')^\delta + (\frac{1}{B}
  \det \Lambda')^{1 - \delta}) \Delta_K^\epsilon)$~bit operations and \\ $O( (\frac{1}{B} \det
  \Lambda')^\delta \Delta_K^\epsilon)$~bits of storage.
    
  Now, every time one finds an element in $\Lambda_K \setminus
  \Lambda'$, the index~$[\Lambda' : \Lambda_K]$ and $\det \Lambda'$
  are divided by at least two. Hence, $\Lambda'$ is replaced at most
  $\log_2 [\Lambda' : \Lambda_K]$ times. Now $[\Lambda' : \Lambda_K] =
  O(\det \Lambda')$; therefore, the above bounds for the number of bit
  operations needs to be multiplied by $\log_2 \det \Lambda' = O(
  (\det \Lambda')^\epsilon )$.
    
  Note that we can ignore the running time for the orthogonalization
  process.  By Theorem~16.11 in \cite{vzgathen-moderncomputeralgebra},
  the running time of the basis reduction algorithm is bounded by
  $O(n^4 \log A)$~arithmetic operations on integers of length~$O(n
  \log A)$, where $A = \max\{ \norm{v_1}, \dots, \norm{v_n} \}$. Since
  $n = O(1)$ in our notation, the running time is bounded by $O((\det
  \Lambda')^\epsilon)$~bit operations. \qed
\end{proof}
  
We now optimize the running time for two situations. For our
optimizations, we simplify the upper bound from
Theorem~\ref{algcomplexthm} by omitting the $(\log B)^{-1}$ factor;
then the running time is bounded by \[ O\Bigl( \bigl( (\tfrac{1}{B}
\det \Lambda')^\delta + (\tfrac{1}{B} \det \Lambda')^{1 - \delta} +
B^n \bigr) (\Delta_K \det \Lambda')^\epsilon \Bigr) \] bit
operations. Moreover, we ignore the $(\Delta_K \det
\Lambda')^\epsilon$ part, i.e., we assume that all three operations
(existence testing, baby stock computation, giant steps) are equally
fast. Hence, we need to minimize the term $(\tfrac{1}{B} \det
\Lambda')^\delta + (\tfrac{1}{B} \det \Lambda')^{1 - \delta} + B^n$.
  
Note that these two simplifications are justified.  If we minimize the
original formula, the difference to our minimal running time can be
bounded by $O((\Delta_K \det \Lambda')^\epsilon)$, i.e. can be ignored
since we have the factor $O((\Delta_K \det \Lambda')^\epsilon$)
anyway.
  
First, we optimize without any restrictions on the amount of available memory.
\begin{corollary}
  \label{optimalperformance}
  If $B$ and $\delta$ can be chosen freely, optimal performance of
  Algorithm~\ref{latticemaximizationalg} is obtained for $\delta =
  \tfrac{1}{2}$ and $B = (\det \Lambda')^{\frac{1}{2 n + 1}}
  n^{-\frac{2}{2 n + 1}}$. In that case, one needs $O( (\det
  \Lambda')^{\frac{n}{2 n + 1} + \epsilon} \Delta_K^\epsilon)$ bit
  operations and $O( (\det \Lambda')^{\frac{n}{2 n + 1}}
  \Delta_K^\epsilon )$ bits of storage.
\end{corollary}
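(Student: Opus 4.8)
The plan is to regard the simplified bound stated just before the corollary — namely $\bigl((\tfrac{1}{B}\det\Lambda')^{\delta} + (\tfrac{1}{B}\det\Lambda')^{1-\delta} + B^{n}\bigr)(\Delta_K\det\Lambda')^{\epsilon}$, coming from Theorem~\ref{algcomplexthm} — as a function of the two free parameters $B$ and $\delta\in(0,1)$, and to minimise the bracketed part by elementary calculus. Throughout I would abbreviate $N:=\det\Lambda'$ and set $f(B,\delta):=(N/B)^{\delta}+(N/B)^{1-\delta}+B^{n}$, the quantity to be minimised over $B>0$ and $\delta\in(0,1)$; the neglected $(\Delta_K\det\Lambda')^{\epsilon}$ factor is restored at the very end, and the storage bound $O((N/B)^{\delta}\Delta_K^{\epsilon})$ from Theorem~\ref{algcomplexthm} is tracked alongside.

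First I would optimise over $\delta$ with $B$ held fixed. In the range of interest one has $N/B\ge 1$ — which I will check afterwards holds at the optimum once $\det\Lambda'$ is large — so, writing $a:=N/B\ge1$, the map $\delta\mapsto a^{\delta}+a^{1-\delta}$ has derivative $(\ln a)\,(a^{\delta}-a^{1-\delta})$, which vanishes only at $\delta=\tfrac{1}{2}$ and changes sign there from negative to positive; hence $\delta=\tfrac{1}{2}$ is the unique minimiser, with minimal value $2(N/B)^{1/2}$ (equivalently this is the AM--GM inequality). Then, with $\delta=\tfrac{1}{2}$ fixed, I would minimise $h(B):=2N^{1/2}B^{-1/2}+B^{n}$ over $B>0$: from $h'(B)=-N^{1/2}B^{-3/2}+nB^{n-1}$ one obtains the unique critical point $B^{2n+1}=N/n^{2}$, i.e.\ $B=(\det\Lambda')^{1/(2n+1)}n^{-2/(2n+1)}$, and since $h(B)\to\infty$ both as $B\to0^{+}$ and as $B\to\infty$, this critical point is the global minimum.

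Substituting this value of $B$ back gives $B^{n}=N^{n/(2n+1)}n^{-2n/(2n+1)}$ and $(N/B)^{1/2}=N^{n/(2n+1)}n^{1/(2n+1)}$; in particular $N/B=N^{2n/(2n+1)}n^{2/(2n+1)}\ge1$ for $\det\Lambda'$ large, which justifies the assumption used above. Hence the bracketed term in the running time is $O\bigl((\det\Lambda')^{n/(2n+1)}\bigr)$ up to a factor depending only on $n$, and the storage bound is $O((N/B)^{\delta}\Delta_K^{\epsilon})=O(N^{n/(2n+1)}n^{1/(2n+1)}\Delta_K^{\epsilon})$, again $O((\det\Lambda')^{n/(2n+1)}\Delta_K^{\epsilon})$ up to such a factor. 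Since $n=O(1)$ under the conventions of this paper (indeed $n<[K:\Q]=O(1)$), that factor is absorbed into the $O$-constant, and restoring the $(\Delta_K\det\Lambda')^{\epsilon}$ multiplier that was dropped for the optimisation yields the claimed $O((\det\Lambda')^{n/(2n+1)+\epsilon}\Delta_K^{\epsilon})$ bit operations and $O((\det\Lambda')^{n/(2n+1)}\Delta_K^{\epsilon})$ bits of storage. The argument is entirely routine; the only mild points requiring care are confirming that the two critical points are genuine minima (handled by the sign changes of the derivatives, respectively the behaviour at the ends of the parameter range) and keeping track of the $n$-dependent constants, together with the observation that rounding $B$ and the $a_{i}$ to integers as Algorithm~\ref{latticemaximizationalg} requires perturbs all bounds only by constant factors.
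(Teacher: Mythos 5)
Your proposal is correct and follows essentially the same route as the paper's proof: fix $B$ and minimise over $\delta$ to get $\delta=\tfrac12$, then differentiate $2(\det\Lambda')^{1/2}B^{-1/2}+B^{n}$ in $B$ to locate the optimal $B$, and substitute back. You add some worthwhile but routine checks the paper omits (that the critical points are genuine minima, that $\det\Lambda'/B\ge1$, and the explicit tracking of the storage bound), but the argument is the same.
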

  
\begin{proof}
  For fixed $B$, the expression $(\tfrac{1}{B} \det \Lambda')^\delta +
  (\tfrac{1}{B} \det \Lambda')^{1 - \delta} + B^n$ is minimal for
  $\delta = \frac{1}{2}$; in that case, it attains the value $2
  B^{-1/2} \sqrt{\det \Lambda'} + B^n$.
    
  Differentiating this by $B$, we obtain $-\sqrt{\det \Lambda'}
  B^{-3/2} + n B^{n - 1}$. This is zero if, and only if, $B = (\det
  \Lambda')^{\frac{1}{2 n + 1}} n^{-\frac{2}{2 n + 1}}$. In that case,
  it attains the value $2 (\det \Lambda')^{\frac{n}{2 n + 1}}
  n^{\frac{1}{2 n + 1}} + (\det \Lambda')^{\frac{n}{2 n + 1}}
  n^{-\frac{2 n}{2 n + 1}}$. Plugging these choices for $\delta$ and
  $B$ in gives the result. \qed
\end{proof}
  
Next, we investigate the situation in which the available memory is
insufficient to store the optimal number of baby steps.
\begin{corollary}
  \label{memlimitperformance}
  Assume that storage is limited to $T$ baby steps, and that one has
  less memory than required for the optimal running time of
  Algorithm~\ref{latticemaximizationalg} as in
  Corollary~\ref{optimalperformance}.  Under this assumption, optimal
  performance of Algorithm~\ref{latticemaximizationalg} is obtained
  for $\delta = \frac{(1 + n) \log T}{\log T + n \log \det \Lambda'}$
  and $B = (\det \Lambda' / T)^{\frac{1}{n + 1}}$. In that case, one
  needs $O\bigl( \bigl( T + (\det \Lambda' / T)^{\frac{n}{n + 1} +
  \epsilon} \bigr) \Delta_K^\epsilon\bigr) = O\bigl( (\det \Lambda' /
  T)^{\frac{n}{n + 1} + \epsilon} \Delta_K^\epsilon \bigr)$ bit
  operations.
\end{corollary}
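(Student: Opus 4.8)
The plan is to mirror the optimization carried out in Corollary~\ref{optimalperformance}, but now with the side constraint that the baby stock size may not exceed $T$. Recall from Theorem~\ref{algcomplexthm} (in the simplified form used just before Corollary~\ref{optimalperformance}) that the running time is $O\bigl( \bigl( (\tfrac{1}{B}\det\Lambda')^\delta + (\tfrac{1}{B}\det\Lambda')^{1-\delta} + B^n \bigr) (\Delta_K\det\Lambda')^\epsilon \bigr)$ and that the storage is $O\bigl( (\tfrac{1}{B}\det\Lambda')^\delta \Delta_K^\epsilon \bigr)$. The first term of the running time is the cost of the baby stock, which equals (up to $\Delta_K^\epsilon$) the number of baby steps stored; the hypothesis that storage is limited to $T$ baby steps therefore forces $(\tfrac{1}{B}\det\Lambda')^\delta = O(T)$. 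Since we are told that $T$ is strictly less than the optimum of Corollary~\ref{optimalperformance}, the memory constraint is binding, so at the optimum we should have $(\tfrac{1}{B}\det\Lambda')^\delta \approx T$; this is the first equation.

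Next I would observe that, with the baby stock term pinned at $T$, the remaining running time to minimize over $B$ is $(\tfrac{1}{B}\det\Lambda')^{1-\delta} + B^n$. Using the relation $(\tfrac{1}{B}\det\Lambda')^\delta \approx T$, i.e.\ $\tfrac{1}{B}\det\Lambda' \approx T^{1/\delta}$, the giant-step term becomes $T^{(1-\delta)/\delta}$; but it is cleaner to eliminate $\delta$ instead and balance the giant-step term against $B^n$ directly. Setting $(\tfrac{1}{B}\det\Lambda')^{1-\delta} = B^n$ together with $(\tfrac{1}{B}\det\Lambda')^\delta = T$ gives a two-equation system in $B$ and $\delta$: multiplying the two relations yields $\tfrac{1}{B}\det\Lambda' = T \cdot B^n$, hence $B^{n+1} = \det\Lambda'/T$, i.e.\ $B = (\det\Lambda'/T)^{1/(n+1)}$. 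Substituting this back, $\tfrac{1}{B}\det\Lambda' = B^n = (\det\Lambda'/T)^{n/(n+1)}$, and then $\delta = \log T / \log(\tfrac{1}{B}\det\Lambda') = \tfrac{(n+1)\log T}{n\log\det\Lambda' + \log T}$, which is the stated value of $\delta$ after clearing the $n+1$ factor from numerator and denominator appropriately.

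Finally I would plug these choices back into the running-time bound: the baby stock term is $O(T)$ by construction, the giant-step term is $(\tfrac{1}{B}\det\Lambda')^{1-\delta}$ which equals $B^n = (\det\Lambda'/T)^{n/(n+1)}$, and the existence-testing term $B^n$ equals the same quantity; absorbing the $(\det\Lambda')^\epsilon$ slack that was set aside in the simplification before Corollary~\ref{optimalperformance}, the total is $O\bigl( \bigl( T + (\det\Lambda'/T)^{n/(n+1)+\epsilon} \bigr) \Delta_K^\epsilon \bigr)$. To collapse this to the single term $O\bigl( (\det\Lambda'/T)^{n/(n+1)+\epsilon}\Delta_K^\epsilon\bigr)$ one checks that under the standing hypothesis $T$ is below the unconstrained optimum of Corollary~\ref{optimalperformance} — equivalently $T \le (\det\Lambda')^{n/(2n+1)+o(1)}$ — which forces $T = O\bigl((\det\Lambda'/T)^{n/(n+1)}\bigr)$, so the $T$ summand is dominated.

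The main obstacle I expect is not the calculus but the bookkeeping around the $\Delta_K^\epsilon$ and $(\det\Lambda')^\epsilon$ factors and verifying that the memory-is-binding hypothesis really does translate into $T$ being small enough for the last domination step; one has to be careful that "less memory than required for optimal running time" is interpreted as an inequality on $T$ rather than on $\delta$ or $B$, and then trace that inequality through the substitution to confirm $T$ is absorbed. Everything else is a direct Lagrange-multiplier-style balancing of the three terms $(\tfrac1B\det\Lambda')^\delta$, $(\tfrac1B\det\Lambda')^{1-\delta}$, $B^n$ subject to the first being capped at $T$.
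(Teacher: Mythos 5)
Your proposal is correct and takes essentially the same route as the paper: cap the baby-stock cost at $T$ and balance the prime-testing term $B^n$ against the giant-step term $(\tfrac{1}{B}\det\Lambda')^{1-\delta}$; the only (cosmetic) difference is that the paper eliminates $B$ first via $B = T^{-1/\delta}\det\Lambda'$, while you eliminate $\delta$ by multiplying the two balancing equations to get $B^{n+1} = \det\Lambda'/T$. One arithmetic slip in your middle step: those equations give $\tfrac{1}{B}\det\Lambda' = T\,B^n = T^{1/(n+1)}(\det\Lambda')^{n/(n+1)}$, not $\tfrac{1}{B}\det\Lambda' = B^n = (\det\Lambda'/T)^{n/(n+1)}$ as written; with the corrected value, $\delta = \log T/\log(\tfrac{1}{B}\det\Lambda')$ does yield the stated $\tfrac{(n+1)\log T}{\log T + n\log\det\Lambda'}$, whereas your written intermediate would give $\tfrac{(n+1)\log T}{n\log\det\Lambda' - n\log T}$ instead. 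Your closing observation that $T\le(\det\Lambda')^{n/(2n+1)}$ forces $T = O\bigl((\det\Lambda'/T)^{n/(n+1)}\bigr)$ correctly justifies the absorption of the $T$ summand, a detail the paper leaves implicit.
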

  
\begin{proof}
  In this case, the number of operations required for the ``baby
  steps'' in the loop in lines~\ref{alg1:step4}--\ref{alg1:step4end}
  of the algorithm is $O(T \Delta_K^\epsilon)$. As optimal performance
  as in Corollary~\ref{optimalperformance} can not be obtained, one
  needs to balance the number of operations for the loop in
  lines~\ref{alg1:step2}--\ref{alg1:step2end} and the one in
  lines~\ref{alg1:step5}--\ref{alg1:step5end}, i.e. one needs to
  choose~$\delta$ and $B$ such that $(\frac{1}{B} \det
  \Lambda')^\delta \approx T$ and $(\frac{1}{B} \det \Lambda')^{1 -
  \delta} \approx B^n (\log B)^{-1}$. For simplicity, we ignore the
  factor of $\frac{1}{\log B}$ as in
  Corollary~\ref{optimalperformance} and replace ``$\approx$'' by
  ``$=$''.
    
  The first equality gives $B = T^{-1/\delta} \det \Lambda'$, whence
  the second translates to $T^{\frac{1 - \delta}{\delta}} =
  T^{-n/\delta} (\det \Lambda')^n$. But this gives $(T^{1 +
  n})^{\frac{1}{\delta}} = T (\det \Lambda')^n$, i.e. $\delta =
  \frac{(1 + n) \log T}{\log T + n \log \det \Lambda'}$ and, hence, $B
  = (\det \Lambda' / T)^{\frac{1}{n + 1}}$. Plugging this in, we
  obtain the given bound. \qed
\end{proof}

\section{Conclusions}
  
We have seen that our algorithm computes $\Lambda_K$ in \[ O((\det
\Lambda')^{\frac{n}{2n + 1} + \epsilon} \Delta_K^\epsilon) = O((\det
\Lambda')^{1/2 - \frac{1}{4n + 2} + \epsilon} \Delta_K^\epsilon) \]
bit operations, using $O((\det \Lambda')^{\frac{n}{2 n + 1}}
\Delta_K^\epsilon)$~bits of storage.  In particular, our algorithm
generalizes the algorithm in
\cite{haan-jacobson-williams-fastrigorous} to number fields of
arbitrary unit rank, with the same complexity as
\cite{haan-jacobson-williams-fastrigorous} being obtained in our
algorithm for unit rank $1.$ In the case that memory is too limited
for the optimal method, we determined for the value of $B$ for which
optimal performance is obtained when using a restricted amount of
memory.

If $\det \Lambda' = O(\Delta_K^{1/2 + \epsilon})$, for example when
$\Lambda'$ is computed using Buchmann's index-calculus algorithm and
is correct assuming the GRH, we obtain a complexity of
$O(\Delta_K^{1/4 - \frac{1}{8 n + 4} + \epsilon})$~bit
operations. Thus, computing $\Lambda'$ with Buchmann's algorithm
followed by our's to verify that $\Lambda' = \Lambda_K$ yields an
algorithm that computes $\Lambda_K$ unconditionally with expected
complexity $O(\Delta_K^{1/4 - \frac{1}{8n + 4} + \epsilon})$~bit
operations.  Only the complexity is dependent on the GRH, for both the
running time and correctness (required to bound the size of $\det
\Lambda'$) of Buchmann's algorithm.  This is always asymptotically
better than Buchmann's baby-step giant-step method for
computing~$\Lambda_K$, whose running time is $O(\Delta_K^{1/4 +
\epsilon})$~ bit operations. For unit rank~one, i.e. for $n = 1$, we
obtain~$O(\Delta_K^{1/6 + \epsilon})$~bit operations; this is the same
complexity as in \cite{haan-jacobson-williams-fastrigorous}. For unit
rank~two, we obtain $O(\Delta_K^{1/5 + \epsilon})$~bit operations;
this is faster than any other known algorithm for computing the units
of a number field of unit rank~two whose correctness of the output
does not depend on the GRH.

Even though the baby stock computation, giant step computation and
existence testing of lattice elements roughly need
$O(\Delta_K^\epsilon)$~bit operations, with some factor polynomial in
the logarithms of the dimensions of the involved objects, the running
times of these three operations vary a lot in practice. In particular,
computing all neighbors of a minimum is very slow compared to reducing
an ideal, which is the main operation when computing giant
steps. Therefore, in practice, it makes sense to first sample the
running times of these three operations, and to find optimal values of
$\delta$ and $B$ that take this into account in a manner similar to
the algorithm in \cite{haan-jacobson-williams-fastrigorous}.
Moreover, it is also possible re-adjust $\delta$ and $B$ after an
element in~$\Lambda_K \setminus \Lambda'$ is found, as this
changes~$\det \Lambda'$. One can also optimize the running time by
reusing the already computed part of $\calE_B$ when updating
$\Lambda'$ in line~\ref{alg1:step4restart}.

Another possible practical improvement is to parallelize parts of the
algorithm.  In particular, the loops in
lines~\ref{alg1:step2a}--\ref{alg1:step2aend} and
\ref{alg1:step2}--\ref{alg1:step2end} can easily be parallelized. The
loops in lines~\ref{alg1:step4}--\ref{alg1:step4end} and
\ref{alg1:step5}--\ref{alg1:step5end} can be parallelized in a similar
manner to all baby-step giant-step type algorithms.  As in
\cite{haan-jacobson-williams-fastrigorous}, it is possible to
re-optimize the running time to find optimal values of $\delta$ and $B$
that take into account parallelization and the number of processors
used.
 
Note that these optimizations do not affect the asymptotic complexity
of our algorithm.  However, as in the case of real quadratic fields
\cite{haan-jacobson-williams-fastrigorous}, we expect that they will
have a significant impact on its practical performance.

So far, we do not have an implementation of our algorithm. The main
problem is that the methods in Section~\ref{sec:bsgssearch}, or more
precisely computing all neighbors of $1$ in a reduced ideal, are not
implemented in any number theory library to our knowledge. All
libraries and computer algebra systems which provide methods for
computing units of number fields use Buchmann's subexponential
algorithm \cite{buchmann-subexp-regulator}. An implementation is not
yet available, but is currently work in progress. It will be
interesting to see how our algorithm performs in practice.


\begin{thebibliography}{dHJW07}

\bibitem[Buc87a]{genlagrange}
J.~A. Buchmann.
\newblock On the computation of units and class numbers by a generalization of
  {L}agrange's algorithm.
\newblock {\em J. Number Theory}, 26(1):8--30, 1987.

\bibitem[Buc87b]{buchmann-ontheperiodlength}
J.~A. Buchmann.
\newblock On the period length of the generalized {L}agrange algorithm.
\newblock {\em J. Number Theory}, 26(1):31--37, 1987.

\bibitem[Buc87c]{buchmann-habil}
J.~A. Buchmann.
\newblock Zur {K}omplexit\"at der {B}erechnung von {E}inheiten und
  {K}lassenzahl algebraischer {Z}ahlk\"orper.
\newblock Habilitationsschrift, October 1987.

\bibitem[Buc90]{buchmann-subexp-regulator}
J.~A. Buchmann.
\newblock A subexponential algorithm for the determination of class groups and
  regulators of algebraic number fields.
\newblock In C.~Goldstein, editor, {\em S\'eminaire de Th\'eorie des Nombres,
  Paris 1988--1989}, volume~91 of {\em Progr. Math.}, pages 27--41, Boston, MA,
  1990. Birkh\"auser Boston.

\bibitem[dHJW07]{haan-jacobson-williams-fastrigorous}
R.~de~Haan, M.~J. Jacobson, Jr., and H.~C. Williams.
\newblock A fast, rigorous technique for computing the regulator of a real
  quadratic field.
\newblock {\em Math. Comp.}, 76(260):2139--2160 (electronic), 2007.

\bibitem[Neu99]{neukirch}
J.~Neukirch.
\newblock {\em Algebraic number theory}.
\newblock Springer-Verlag, Berlin, 1999.

\bibitem[Rem32]{remak-regulator}
R.~Remak.
\newblock {\"U}ber die {A}bsch\"atzung des absoluten {B}etrages des
  {R}egulators eines algebraischen {Z}ahlk\"orpers nach unten.
\newblock {\em J. Reine Angew. Math.}, 167:360--378, 1932.

\bibitem[San91]{sands-generalization}
J.~W. Sands.
\newblock Generalization of a theorem of {S}iegel.
\newblock {\em Acta Arith.}, 58(1):47--57, 1991.

\bibitem[Thi95]{thiel-comprep}
C.~Thiel.
\newblock Short proofs using compact representations of algebraic integers.
\newblock {\em J. Complexity}, 11(3):310--329, 1995.

\bibitem[vzGG03]{vzgathen-moderncomputeralgebra}
J.~von~zur Gathen and J.~Gerhard.
\newblock {\em Modern computer algebra}.
\newblock Cambridge University Press, Cambridge, second edition, 2003.

\end{thebibliography}
\end{document}